\def\negthickspace{\!\!\!}
\newcommand{\nicefrac}[2]
{\leavevmode \kern.1em\raise.5ex\hbox{\the\scriptfont0 #1}
             \kern-.1em/\kern-.15em\lower.25ex
             \hbox{\the\scriptfont0 #2}}
\newtheorem{theorem}{Theorem}
\newtheorem{proposition}{Proposition}
\newtheorem{corollary}{Corollary}
\newtheorem*{problem}{Problem}
\newtheorem{definition}{Definition}
\newtheorem{lemma}{Lemma}
\theoremstyle{definition}
\newtheorem{remark}{Remark}
\theoremstyle{definition}
\begin{document} 

\begin{center}
{\Large{\sc On the existence of normal Coulomb frames}}\\[1ex]
{\Large{\sc for two-dimensional immersions with higher codimension}}\\[3ex]
{\large Steffen Fr\"ohlich, Frank M\"uller}\\[0.4cm]
{\small\bf Abstract}\\[0.4cm]
\begin{minipage}[c][4cm][l]{12cm}
{\small In this paper we consider the existence and regularity problem for
  Coulomb frames in the normal bundle of two-dimensional surfaces with higher
  codimension in Euclidean spaces. While the case of two codimensions can be
  approached directly by potential theory, more sophisticated methods have to
  be applied for codimensions greater than two. As an application we include an a priori estimate for the corresponding torsion coefficients in arbitrary
  codimensions.}
\end{minipage}
\end{center}
{\small MCS 2000: 35J55, 49N60, 53A07}\\
{\small Keywords: Twodimensional immersions, higher codimension, normal bundle, elliptic systems}
\section{Introduction}
This paper is devoted to the analysis of {\it normal Coulomb frames} for two-dimensional surfaces with higher codimensions $n\ge 2$ in Euclidean spaces $\mathbb R^{n+2}.$ It is the third part of a sequence of papers on embedding problems for surfaces in Euclidean spaces.\\[1ex]
In \cite{froehlich_mueller_2007} we fully treated the case $n=2$ of two codimensions. Then we investigated normal bundles of surfaces with arbitrary codimension $n>2$  in \cite{froehlich_mueller_2008}. In particular, we focused on {\it torsions associated with normal frames,} already introduced by Weyl in \cite{weyl_1921}, and we presented various ways to control analytically the torsion coefficients of so-called {\it normal Coulomb frames} which are crititcal for a {\it functional of total torsion.} But existence and regularity of such frames are left open in this second paper.\\[1ex]
In the present paper we thus prove existence and classical regularity of normal Coulomb frames for surfaces in Euclidean spaces $\mathbb R^{n+2}$ with arbitrary codimensions $n\ge 2.$
%Thus, our primary aim now is to prove existence and classical regularity of normal Coulomb frames consisting of a special section of the orthogonal normal frame bundle of the given surface. As turned out in \cite{froehlich_mueller_2007}, \cite{froehlich_mueller_2008}, it is possible to estimate the torsion coefficients (see formula (\ref{1.1})) for such a normal Coulomb frame in various norms, which certainly will be important in further studies of surfaces with higher codimensions.\\[1ex]
\subsection{Basic definitions}
Let us start with some basic definitions: For an integer $n\ge 1$ we consider vector-valued mappings\footnote{Any vector $Z\in\mathbb R^{n+2}$ represents a \emph{column vector}, we write $Z=(z^1,\ldots,z^{n+2})$ only for visual reasons. \emph{Row vectors} are then denoted by $Z^t$.}
  $$X=X(w)=\big(x^1(u,v),\ldots,x^{n+2}(u,v)\big),\quad w=(u,v)\in\overline B,$$
defined on the closure of the open unit disc $B:=\{w\in\mathbb R^2\,:\,|w|< 1\}.$ Suppose $X\in C^{k,\alpha}(\overline B,\mathbb R^{n+2})$ with an integer $k\ge 3$ and with some $\alpha\in(0,1)$. In combination with the geometric regularity condition
  $$\mbox{rank}\,DX(w)=2\quad\mbox{for all}\ w\in\overline B$$
for the Jacobian $DX\in\mathbb R^{(n+2)\times 2},$ the mapping $X$ represents a {\it regular surface} or a {\it two-dimensional immersion of disc-type.} The linearly independent tangential vectors $X_u=\frac\partial{\partial u}X$ and $X_v=\frac\partial{\partial v}X$ span the {\it tangential space} $\mathbb T_X(w)$ at the particular point $w=(u,v)\in\overline B,$ i.e.
  $$\mathbb T_X(w)=\mbox{span}\,\big\{X_u(w),X_v(w)\big\}.$$
For the whole paper we assume $X$ to be conformally parametrized, i.e.~the
conformality relations
  $$g_{11}=g_{22}>0,\quad g_{12}=0\quad\mbox{on}\ \overline B$$
are satisfied for the coefficients $g_{ij}=\langle X_{u^i},X_{u^j}\rangle$ of the first fundamental form of $X.$\\[1ex] 
\noindent
Next, we define the {\it normal (co-)space} ($\langle\cdot,\cdot\rangle$ denotes the inner product between two vectors)
  $$\mathbb N_X(w):=\mathbb T_X(w)^\perp=\big\{Z\in\mathbb R^{n+2}\,:\,\langle X_u(w),Z\rangle=\langle X_v(w),Z\rangle=0\big\}
    \quad\mbox{for}\ w\in\overline B.$$
Then we have the decomposition of the ambient space $\mathbb R^{n+2}=\mathbb T_X(w)\oplus\mathbb N_X(w)$ for each $w\in\overline B$. We choose unit normal vectors $N_\sigma=N_\sigma(w),$ $\sigma=1,\ldots,n,$ satisfying $\langle N_\sigma,N_\vartheta\rangle=\delta_{\sigma\vartheta}$, where $\delta_{\sigma\vartheta}=\delta_\sigma^\vartheta=\delta^{\sigma\vartheta}$ denote the usual Kronecker symbols, spanning $\mathbb N_X(w)$ and being oriented:
\begin{equation}\label{1.1}
 \det\big(X_u,X_v,N_1,\ldots,N_n\big)>0.
\end{equation}
\begin{definition}
A matrix $N=(N_1,\ldots,N_n)\in C^2(\overline B,\mathbb R^{(n+2)\times n})$ consisting of $n\ge 1$ orthonormal unit normal vectors $N_\sigma=N_\sigma(w)$, being oriented in the sense of (\ref{1.1}) and spanning the $n$-dimensional normal space $\mathbb N_X(w)$ at each point $w\in \overline B$, is called a \emph{normal frame.}
\end{definition}
\noindent 
Finally, we sometimes interpret a matrix $A=(A_\sigma^\vartheta)_{\sigma,\vartheta=1,\ldots,n}\in\mathbb R^{n\times n}$ as a vector in $\mathbb R^{n^2}$ assigning the well-known scalar product and length
  $$\langle A,B\rangle:=\mbox{trace}(A\circ B^t)=\sum_{\sigma,\vartheta=1}^n A_\sigma^\vartheta B_\sigma^\vartheta,\quad
    |A|:=\sqrt{\langle A,A\rangle}=\bigg(\sum_{\sigma,\vartheta=1}^n (A_\sigma^\vartheta)^2\bigg)^{\frac12}.$$
\subsection{Normal Coulomb frames}
For a given surface $X\in C^{k,\alpha}(\overline B,\mathbb R^{n+2})$ there always exists a normal frame $N\in C^{k-1,\alpha}(\overline B)$, \emph{but its choice is not unique!} Rather, we can transform a given normal frame $N=(N_1,\ldots,N_n)$ by means of an orthogonal mapping $(R_\sigma^\vartheta)_{\sigma,\vartheta=1,\ldots,n}\in C^2(\overline B,SO(n))$ into a new normal frame $\widetilde N=(\widetilde N_1,\ldots,\widetilde N_n)$ as follows:
\begin{equation}\label{drehung}
  \widetilde N_\sigma=\sum_{\vartheta=1}^nR_\sigma^\vartheta N_\vartheta\,,\quad\sigma=1,\ldots,n.
\end{equation}
According to this freedom of choice there naturally arises the question: What is a ``good'' frame? Possibly there exists a {\it parallel frame} $N$ for the given surface $X.$ {\it Parallelity} in this context means that all derivatives of any unit normal vector $N_\sigma$ are tangential.\\[1ex]
It turns out that {\it parallel frames are special normal Coulomb frames.} To see this, let us specify the term \emph{normal Coulomb frame}: First, let $u^1:=u$ and $u^2:=v$ just to make the Ricci calculus applicable. We introduce the {\it connection coefficients of the normal bundle}, also called {\it torsion coefficients}\footnote{Compare these quantities with the torsion $\tau$ of an arc-length parametrised curve in $\mathbb R^3,$ equipped with the standard frame $\{{\mathfrak t},{\mathfrak n},{\mathfrak b}\}$ consisting of the unit tangential vector ${\mathfrak t},$ the unit normal vector ${\mathfrak n},$ and the unit binormal vector ${\mathfrak b}.$ Then, the identity $\tau={\mathfrak n}'\cdot{\mathfrak b}=-{\mathfrak n}\cdot{\mathfrak b}'$ justifies our notation  {\it torsion coefficient} for $T_{\sigma,i}^\vartheta.$\\
As already mentioned, torsion coefficients for orthonormal frames in the normal bundles of higher-dimensional manifolds in Euclidean spaces were first considered by Weyl \cite{weyl_1921}.} (see e.g. \cite{chavel_2006} Section II.2 or \cite{helein_2002} pp.~61--63),
\begin{equation}\label{1.2}
  T_{\sigma,i}^\vartheta
  :=\langle N_{\sigma,u^i},N_\vartheta\rangle
   =-\langle N_\sigma,N_{\vartheta,u^i}\rangle=
   -T_{\vartheta,i}^\sigma\,,\quad
  i=1,2,\ \sigma,\vartheta=1,\ldots,n,
\end{equation}
where $N_{\sigma,u^i}=\frac{\partial}{\partial u^i}\,N_\sigma$ etc. Then a normal frame $N$ is parallel if and only if it is {\it free of torsion}, i.e. if all torsion coefficients $T_{\sigma,i}^\vartheta$ vanish identically.
%\\[1ex]
%For the torsion coefficients $T_{\sigma,i}^\vartheta$ exhibit tensorial transformation behaviour w.r.t. $u^i,$ i.e. they depend on the choice of the %parametrization, freedom of torsion is a parameter invariant feature.
\\[1ex]
\noindent
Next, let us write $g^{ij}$ for the coefficients of the inverse
$(g_{ij})_{i,j=1,2}^{-1}$ of the metric tensor and $W:=\sqrt{g_{11}g_{22}-g_{12}^2}$ for the area element of the surface.  
\begin{definition}\label{def_Coulomb}
A \emph{normal Coulomb frame} is a normal frame which is critical for the \emph{functional of total torsion}
  $${\mathcal T}_X(N)
    =\sum_{i,j=1}^2\,
     \sum_{\sigma,\vartheta=1}^n\,
     \int\hspace{-1.5ex}\int\limits_{\hspace{-2ex}B}
     g^{ij}T_{\sigma,i}^\vartheta T_{\sigma,j}^\vartheta\,W\,dudv.$$
Here a normal frame $N$ is called \emph{critical} for $\mathcal T_X$ if the first variation $\lim_{\varepsilon\to0}\frac1\varepsilon\{\mathcal T_X(\widetilde N)-\mathcal T_X(N)\}$ vanishes for all normal frames $\widetilde N=(\widetilde N_1(w,\varepsilon),\ldots,\widetilde N_n(w,\varepsilon))$ defined by
  $$\widetilde N_\sigma(w,\varepsilon)=\sum_{\vartheta=1}^nR_\sigma^\vartheta(w,\varepsilon)N_\vartheta(w),\quad w\in\overline B,
    \quad \varepsilon\in(-\varepsilon_0,\varepsilon_0),$$
with small $\varepsilon_0>0$ and a one-parameter family $(R_\sigma^\vartheta(w,\varepsilon))_{\sigma,\vartheta=1,\ldots,n}\in C^2(\overline B\times(-\varepsilon_0,+\varepsilon_0),SO(n))$ satisfying $R_\sigma^\vartheta(w,0)=\delta_\sigma^\vartheta$ on $\overline B$.
\end{definition}
\noindent
The functional of total torsion {\it does not depend on the special parameters} $u^i.$ But taking the conformal parametrization and the skew symmetry of the torsion coefficients in $\sigma\leftrightarrow\vartheta$ into account, it takes the particularly simple form
  $${\mathcal T}_X(N)
    =\sum_{\sigma,\vartheta=1}^n\int\hspace{-1.5ex}\int\limits_{\hspace{-2ex}B}
      \Big\{
        (T_{\sigma,1}^\vartheta)^2+(T_{\sigma,2}^\vartheta)^2
      \Big\}\,dudv = 2\int\hspace{-1.5ex}\int\limits_{\hspace{-2ex}B}
      \Big\{
        (T_{1,1}^2)^2+(T_{1,1}^3)^2+\ldots+(T_{n-1,2}^n)^2
      \Big\}\,dudv\,.$$
There always holds ${\mathcal T}_X(N)\ge 0$, and we have ${\mathcal T}_X(N)= 0$ if and only if $N$ is parallel. Hence, parallel frames are special normal Coulomb frames. One of our results in \cite{froehlich_mueller_2007}, \cite{froehlich_mueller_2008} is \emph{that any normal Coulomb frame is parallel if the normal bundle of the immersion is flat.} In other words, if all components $S_{\sigma,ij}^\vartheta$ of the following \emph{curvature tensor of the connection coefficients $T_{\sigma,i}^\vartheta$} vanish identically (see also \cite{chen_1973} or \cite{helein_2002}, and the references therein):
\begin{equation}\label{normal_curvature_tensor}
 S_{\sigma,ij}^\vartheta
  :=T_{\sigma,i,u^j}^\vartheta-T_{\sigma,j,u^i}^\vartheta
   +\sum_{\omega=1}^n\big(T_{\sigma,i}^\omega T_{\omega,j}^\vartheta-T_{\sigma,j}^\omega T_{\omega,i}^\vartheta\big)\,,\quad
  i,j=1,2,\ \sigma,\vartheta=1,\ldots,n.
\end{equation}
This special property of the $T_{\sigma,i}^\vartheta$ does not depend on the choice of the normal frame and the parameters $u^i$ of $X$. But in  the general non-vanishing case, the $S_{\sigma,ij}^\vartheta$ depend on the chosen normal frame and the parametrization of $X$. We will address the issue of the link between $S_{\sigma,ij}^\vartheta$ and a geometric curvature quantity of the normal bundle in Subsection \ref{subsection_4.1}.\\[1ex]
The curvature tensor (\ref{normal_curvature_tensor}) is shortly named \emph{normal curvature tensor}. It emerges from the so-called {\it Ricci integrability conditions} which demand the vanishing of the normal components of the identity $N_{\sigma,u^iu^j}-N_{\sigma,u^ju^i}\equiv 0$ right in a similar way as the Riemannian curvature tensor $R_{ijk\ell}$ emerges from the integrability conditions w.r.t. $X_{u^iu^ju^k}-X_{u^iu^ku^j}\equiv 0$ for tan\-gential vector fields $X_{u^i}.$ \\[1ex]
In the general case of nonvanishing normal curvature tensor, there cannot exist a parallel frame in the normal bundle. Then the torsion coefficients  appear explicitely in many equations of the differential geometry. For instance, they can be found in the second variation formula of the area functional. Hence, one has to control the torsion coefficients if one wants to prove geometric estimates depending on stability questions. And since normal Coulomb frames are critical points of the $L^2$-norm of the torsion coefficients by Definition\,\ref{def_Coulomb}, we are led to the study of normal Coulomb frames instead of parallel frames in the sequel.\\[1ex]
We finally note that definition (\ref{normal_curvature_tensor}) yields that the normal curvature tensor is completely determined by the coefficients\footnote{In fact, we have also a skew symmetry in $\sigma\leftrightarrow\vartheta$, but we will not use this property in the present paper.
}
\begin{equation}\label{s_12}
  S_{\sigma,12}^\vartheta
  :=T_{\sigma,1,u^2}^\vartheta-T_{\sigma,2,u^1}^\vartheta
   +\sum_{\omega=1}^n\big(T_{\sigma,1}^\omega T_{\omega,2}^\vartheta-T_{\sigma,2}^\omega T_{\omega,1}^\vartheta\big)\,,\quad
  \sigma,\vartheta=1,\ldots, n\,.
\end{equation}
\subsection{Overview}
In this paper we consider the following
\begin{problem}
Let the conformally parametrized immersion $X\in C^{k,\alpha}(\overline B,\mathbb R^{n+2})$ with $k\ge 3$, $\alpha\in(0,1)$ be given. Does there always exist a smooth normal Coulomb frame $N$, i.e.~a normal frame which is critical for the functional of total torsion ${\mathcal T}_X$ and belongs to class $C^{k-1,\alpha}(\overline B)$? 
\end{problem}
\noindent
In the subsequent sections we answer this question by YES; this is the content of Theorems \ref{existence_r4} and \ref{existence_rn} below. We can even choose $N$ to be minimizing for $\mathcal T_X$.\\[1ex]
Obviously, the case $n=1$ is trivial. In case of codimension $n=2$ we best let classical potential theory work to solve our problem (see Section \ref{section_3} below). For $n\ge3$ we need a more subtle approach: In Subsection \ref{ss4.1} we first construct a \emph{weak} normal Coulomb frame $N$ of class $W^{1,2}(B)\cap L^\infty(B)$ by employing a variational argument which goes back to Fr\'ed\'eric H\'elein \cite{helein_2002}: To study harmonic mappings into Riemannian manifolds of arbitrary dimension and without special geometric symmetries, H\'elein introduced \emph{tangential} Coulomb frames (i.e.~special sections of the orthogonal \emph{tangential} frame bundle) as critical points of an energy functional similar to our functional of total torsion.\\[1ex]
In a second step we prove that the constructed weak normal Coulomb frame is in fact smooth (Subsection\,\ref{ss4.2}). This investigation is quite different from H\'elein's analysis, who was interested in the regularity of the harmonic map itself rather than the Coulomb frame. The plan of our proof is as follows: As in \cite{froehlich_mueller_2008} (and as also done by H\'elein), we interpret the Euler-Lagrange equations for our Coulomb frame as integrability conditions. Then the special structure of the torsion coefficients yields a Poisson system for an integral function emerging from Poincare's lemma with right-hand side of div-curl type along with a homogeneous Dirichlet boundary condition. From Wente's inequality we then obtain $N\in W^{2,1}_{loc}(B)$ for our weak normal Coulomb frame, see Lemma \ref{w_12}. On the other hand, (\ref{normal_curvature_tensor}) gives a nonlinear, inhomogeneous second-order system with div-curl structure for that integral function. Now it is important to observe that the zero-order term $S_{12}=(S_{\sigma,12}^\vartheta)_{\sigma,\vartheta}$ of that system is length-invariant under rotations $(R_\sigma^\vartheta)_{\sigma,\vartheta}\in W^{2,1}_{loc}(B,SO(n))$, and this is what we elaborate in Subsection 4.1. This enables us to apply another part of Wente's inequality to obtain global continuity for the integral function and, as a consequence, even $C^{1,\alpha}(\overline B)$-regularity. Now an interplay between the mentioned nonlinear second-order system and the Weingarten equations for our surface $X$ yields the desired regularity result Theorem \ref{existence_rn}.\\[1ex]
We close with a new a priori estimate of the torsion coefficients for a conformally parametrized immersion $X:\overline B\to\mathbb R^{n+2}$ in terms of the normal curvature tensor. 
\section{Euler-Lagrange equations for Coulomb frames}
\label{section_2}
\setcounter{equation}{0}
We briefly repeat the calculation for the Euler-Lagrange equations for ${\mathcal T}_X$-critical normal frames from \cite{froehlich_mueller_2008}:  Consider  an arbitrary family of rotations
  $$R=R(w,\varepsilon)
    =\big(R_\sigma^\vartheta(w,\varepsilon)\big)_{\sigma,\vartheta=1,\ldots,n}
    \in C^2\big(\overline B\times(-\varepsilon_0,+\varepsilon_0),SO(n)\big)$$
with small $\varepsilon_0>0,$ satisfying $R(w,0)=\mbox{id}$ and $\frac{\partial}{\partial\varepsilon}R(w,0)=A(w)\in C^1(\overline B,so(n))$, where $so(n)$ denotes the Lie algebra for $SO(n),$ and $\delta_\sigma^\vartheta$ are the Kronecker symbols. Thus it holds
\begin{equation*}
  R(w,\varepsilon)=\mbox{id}+\varepsilon A(w)+o(\varepsilon)\,.
\end{equation*}
We apply $R$ to a given normal frame $N$ and deduce
\begin{equation*}
  \widetilde N_\sigma
  =\sum_{\vartheta=1}^n
   R_\sigma^\vartheta N_\vartheta
  =N_\sigma
   +\varepsilon
    \sum_{\vartheta=1}^n
    A_\sigma^\vartheta N_\vartheta
   +o(\varepsilon)\,.
\end{equation*}
Consequently, the new torsion coefficients can be expanded to
\begin{equation*}
\begin{array}{l}
  \displaystyle
  \widetilde T_{\sigma,i}^\omega=\langle\widetilde N_{\sigma,u^i},\widetilde N_\omega\rangle
  \,=\,T_{\sigma,i}^\omega
       +\varepsilon A_{\sigma,u^i}^\omega
       +\varepsilon\sum_{\vartheta=1}^n\big(
          A_\sigma^\vartheta T_{\vartheta,i}^\omega
          +A_\omega^\vartheta T_{\sigma,i}^\vartheta
       \big)
       +o(\varepsilon)\,, \\[2ex]
  \displaystyle
  (\widetilde T_{\sigma,i}^\omega)^2
  \,=\,(T_{\sigma,i}^\omega)^2
       +2\varepsilon\,
       \Big\{
         A_{\sigma,u^i}^\omega T_{\sigma,i}^\omega
         +\sum_{\vartheta=1}^n\big(
            A_\sigma^\vartheta T_{\vartheta,i}^\omega T_{\sigma,i}^\omega
            +A_\omega^\vartheta T_{\sigma,i}^\vartheta T_{\sigma,i}^\omega
         \big)
       \Big\}
       +o(\varepsilon)\,.
\end{array}
\end{equation*}
Employing the skew-symmetry of $A=(A_\sigma^\vartheta)_{\sigma,\vartheta}$ and $T_{\sigma,i}^\omega$, we find 
\begin{equation*}
  \sum_{\sigma,\omega,\vartheta=1}^n
  \big(
    A_\sigma^\vartheta T_{\vartheta,i}^\omega T_{\sigma,i}^\omega
    +A_\omega^\vartheta T_{\sigma,i}^\vartheta T_{\sigma,i}^\omega
  \big)
  =
  \sum_{\sigma,\omega,\vartheta=1}^n
  \big(
    A_\sigma^\vartheta T_{\vartheta,i}^\omega T_{\sigma,i}^\omega
	-A_\sigma^\vartheta T_{\omega,i}^\sigma T_{\omega,i}^\vartheta
  \big)
  =0.
\end{equation*}
Hence, summing up $(\widetilde T_{\sigma,i}^\omega)^2$ over $\sigma,\omega=1,\ldots,n$ and $i=1,2$ and integrating over $B$, we arrive at
\begin{equation*}
\begin{array}{l}
  \displaystyle
  {\mathcal T}_X(\widetilde N)-{\mathcal T}_X(N)
  \,=\,2\varepsilon
       \sum_{\sigma,\omega=1}^n\sum_{i=1}^2\,
       \int\hspace{-1.3ex}\int\limits_{\hspace{-1.8ex}B}
       A_{\sigma,u^i}^\omega T_{\sigma,i}^\omega\,dudv
       +o(\varepsilon) 
  \\[4ex]\hspace*{8ex}\displaystyle
  =\,2\varepsilon\sum_{\sigma,\omega=1}^n
     \int\limits_{\partial B}
     A_\sigma^\omega\big\langle(T_{\sigma,1}^\omega,T_{\sigma,2}^\omega),\nu\big\rangle\,ds
     -2\varepsilon\sum_{\sigma,\omega=1}^n
       \int\hspace{-1.3ex}\int\limits_{\hspace{-1.8ex}B}
       A_\sigma^\omega\,\mbox{div}\,(T_{\sigma,1}^\omega,T_{\sigma,2}^\omega)\,dudv
       +o(\varepsilon)
\end{array}
\end{equation*}
with the outward unit normal $\nu$ on $\partial B.$ 
\begin{proposition}\label{p1}
(Fr\"ohlich, M\"uller \cite{froehlich_mueller_2008})\\
$N$ is a normal Coulomb frame, i.e.~$N$ is critical for the total torsion ${\mathcal T}_X,$ if and only if there hold
\begin{equation}\label{euler_lagrange}
  \mbox{\rm div}\,(T_{\sigma,1}^\vartheta,T_{\sigma,2}^\vartheta)=0\quad\mbox{in}\ B,\quad
  \big\langle(T_{\sigma,1}^\vartheta,T_{\sigma,2}^\vartheta),\nu\big\rangle=0\quad\mbox{on}\ \partial B
\end{equation}
for all $\sigma,\vartheta=1,\ldots,n.$
\end{proposition}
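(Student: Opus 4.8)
The plan is to read the characterization directly off the first-variation formula computed just above the statement. Writing $\delta\mathcal T_X(N)[A]$ for the limit appearing in Definition~\ref{def_Coulomb} and noting that every admissible variation is encoded by a matrix $A\in C^1(\overline B,so(n))$ via $R(w,0)=\mathrm{id}$, $\partial_\varepsilon R(w,0)=A(w)$ (for instance $R(w,\varepsilon)=\exp(\varepsilon A(w))$, which is an honest, and for $A$ smooth even $C^\infty$, one-parameter family in $SO(n)$), the computation gives
\begin{equation*}
  \delta\mathcal T_X(N)[A]
  =2\sum_{\sigma,\omega=1}^n\int_{\partial B}A_\sigma^\omega\big\langle(T_{\sigma,1}^\omega,T_{\sigma,2}^\omega),\nu\big\rangle\,ds
  -2\sum_{\sigma,\omega=1}^n\int_B A_\sigma^\omega\,\operatorname{div}(T_{\sigma,1}^\omega,T_{\sigma,2}^\omega)\,dudv .
\end{equation*}
If (\ref{euler_lagrange}) holds, both sums vanish termwise, hence $\delta\mathcal T_X(N)[A]=0$ for every admissible $A$ and $N$ is critical; this settles the ``if'' part.

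For the converse I would first collapse the double sums to sums over $1\le\sigma<\omega\le n$. The skew symmetries $A_\omega^\sigma=-A_\sigma^\omega$ and $T_{\omega,i}^\sigma=-T_{\sigma,i}^\omega$ (together with the linearity of $\operatorname{div}$ and of $\langle\cdot\,,\nu\rangle$) show that the summand for the pair $(\omega,\sigma)$ equals the one for $(\sigma,\omega)$, while the diagonal terms vanish since $A_\sigma^\sigma=0$; thus each double sum equals $2\sum_{1\le\sigma<\omega\le n}$ of the corresponding summand. As the entries $A_\sigma^\omega$ with $\sigma<\omega$ may be prescribed as arbitrary, mutually independent functions of class $C^\infty(\overline B)$, the remaining entries being forced by skew symmetry, I can fix one pair $(\sigma,\omega)$ and vary $A_\sigma^\omega$ freely while all other entries are kept zero.

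Fixing such a pair, I would proceed in two steps. First, restricting to $A_\sigma^\omega\in C_c^\infty(B)$ annihilates the boundary integral, so criticality forces $\int_B A_\sigma^\omega\,\operatorname{div}(T_{\sigma,1}^\omega,T_{\sigma,2}^\omega)\,dudv=0$ for all such test functions. Since $X\in C^{k,\alpha}(\overline B)$ with $k\ge 3$ carries a normal frame $N\in C^2(\overline B)$, the torsion coefficients lie in $C^1(\overline B)$ and $\operatorname{div}(T_{\sigma,1}^\omega,T_{\sigma,2}^\omega)$ is continuous on $\overline B$; the fundamental lemma of the calculus of variations then yields $\operatorname{div}(T_{\sigma,1}^\omega,T_{\sigma,2}^\omega)\equiv0$ in $B$. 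With the bulk term now vanishing for every admissible $A$, criticality reduces to $\int_{\partial B}A_\sigma^\omega\big\langle(T_{\sigma,1}^\omega,T_{\sigma,2}^\omega),\nu\big\rangle\,ds=0$, where $A_\sigma^\omega|_{\partial B}$ ranges over all of $C^\infty(\partial B)$ (each such function extends to an admissible entry). Because $\big\langle(T_{\sigma,1}^\omega,T_{\sigma,2}^\omega),\nu\big\rangle$ is continuous on $\partial B$, the fundamental lemma on the circle gives $\big\langle(T_{\sigma,1}^\omega,T_{\sigma,2}^\omega),\nu\big\rangle\equiv0$ on $\partial B$. Letting $(\sigma,\omega)$ run over all index pairs with $\sigma<\omega$ and recovering the remaining pairs by skew symmetry establishes (\ref{euler_lagrange}).

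The substantial, computational part — the expansion of the torsion coefficients and the integration by parts producing the first-variation formula — has already been performed before the statement, so I do not expect a real obstacle. The only points that require a little care are bookkeeping ones: verifying that the $so(n)$-entries $A_\sigma^\omega$ with $\sigma<\omega$ can indeed be chosen independently and that each such choice comes from a genuine $C^2$ (in fact $C^\infty$) one-parameter family of rotations with $R(w,0)=\mathrm{id}$, which the exponential map supplies; and observing that the a priori regularity $N\in C^2(\overline B)$ is precisely what makes $\operatorname{div}(T_{\sigma,1}^\vartheta,T_{\sigma,2}^\vartheta)$ and the normal trace $\big\langle(T_{\sigma,1}^\vartheta,T_{\sigma,2}^\vartheta),\nu\big\rangle$ continuous, so that the pointwise conclusions drawn from the fundamental lemma are justified.
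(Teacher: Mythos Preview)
Your proposal is correct and follows the same route as the paper: the entire computational content is the first-variation formula derived just above Proposition~\ref{p1}, and the paper simply states the proposition as its immediate consequence without spelling out the fundamental-lemma arguments. You have merely made explicit the standard two-step argument (compactly supported tests for the interior equation, then arbitrary boundary values for the Neumann condition) and the bookkeeping with the $so(n)$-entries that the paper leaves to the reader.
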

\begin{remark}\label{weak_coulomb}
Note that the above computations are meaningful also for \emph{weak normal Coulomb frames} $N\in W^{1,2}(B)\cap L^\infty(B)$. The respective torsion coefficients $T_{\sigma,i}^\vartheta\in L^2(B)$, $\sigma,\vartheta=1,\ldots,n$, $i=1,2,$ are then weak solutions of (\ref{euler_lagrange}), that means, for any choice of $\sigma,\vartheta\in\{1,\ldots,n\}$ we have
\begin{equation}\label{weak_euler}
  \int\hspace{-1.3ex}\int\limits_{\hspace{-1.8ex}B}\big\{\varphi_{u^1}T_{\sigma,1}^\vartheta+\varphi_{u^2}T_{\sigma,2}^\vartheta\big\}\,dudv=0
  \quad\mbox{for all}\ \varphi\in C^\infty(\overline B).
\end{equation}
\end{remark}
\section{Surfaces in $\mathbb R^4$}
\label{section_3}
\setcounter{equation}{0}
In contrast to the previous section we now want to transform a given normal frame $\widetilde N=(\widetilde N_1,\widetilde N_2)$ in $\mathbb R^4$ into a ``good'' normal frame $N=(N_1,N_2)$. This can be done by means of a $SO(2)$-action: 
\begin{equation}\label{so2}
  \begin{pmatrix}
    N_1 ,N_2
  \end{pmatrix}
  =\begin{pmatrix}
     \widetilde N_1, \widetilde N_2
  \end{pmatrix}\circ
  \begin{pmatrix}
     \cos\varphi & -\sin\varphi \\[1ex]
     \sin\varphi & \cos\varphi
   \end{pmatrix}
\end{equation}
with a rotation angle $\varphi\in C^2(\overline B,\mathbb R).$ Then the torsion coefficients of both frames are related by the linear system\footnote{It is sufficient to consider $T_{1,1}^2$ and $T_{1,2}^2$; all other torsion coefficients are either zero or agree with one of them up to the sign on account of their skew-symmetry.}
\begin{equation}\label{transformation_r4}
  T_{1,1}^2= \widetilde T_{1,1}^2+\varphi_u\,,\quad
  T_{1,2}^2= \widetilde T_{1,2}^2+\varphi_v\,.
\end{equation}
Assume now $\widetilde N\in C^{k-1,\alpha}(\overline B,\mathbb R^{4\times 2})$. Due to Proposition \ref{p1} and formula (\ref{transformation_r4}), the frame $N$ from (\ref{so2}) is a normal Coulomb frame if and only if there hold
  $$\mbox{div}\,(\widetilde T_{1,1}^2+\varphi_u,\widetilde T_{1,2}^2+\varphi_v)=0\quad\mbox{in}\ B,\quad
    \big\langle(\widetilde T_{1,1}^2+\varphi_u,\widetilde T_{1,2}^2+\varphi_v),\nu\big\rangle=0\quad\mbox{on}\ \partial B\,.$$
Thus we have to solve the Neumann boundary value problem
\begin{equation}\label{neumann_r4} 
  \Delta\varphi=-\mbox{div}\,(\widetilde T_{1,1}^2,\widetilde T_{1,2}^2)=:f\quad\mbox{in}\ B,\quad
  \frac{\partial\varphi}{\partial\nu}=-\big\langle(\widetilde T_{1,1}^2,\widetilde T_{1,2}^2),\nu\big\rangle=:g\quad\mbox{on}\ \partial B\,,
\end{equation}
which has a solution on account of the integrability condition
  $$\int\hspace{-1.5ex}\int\limits_{\hspace{-2ex}B}
    \mbox{div}\,(\widetilde T_{1,1}^2,\widetilde T_{1,2}^2)\,dudv
    =\int\limits_{\partial B}
     \big\langle(\widetilde T_{1,1}^2,\widetilde T_{1,2}^2),\nu\big\rangle\,ds\,.$$
Note that the right-hand sides in (\ref{neumann_r4}) satisfy $f\in C^{k-3,\alpha}(\overline B,\mathbb R)$, $g\in C^{k-2,\alpha}(\partial B,\mathbb R)$.
Thus, classical potential theory yields
\begin{theorem}
\label{existence_r4}
Suppose $X\in C^{k,\alpha}(\overline B,\mathbb R^4)$ with $k\ge 3,$ $\alpha\in(0,1).$ Then there exists a Coulomb frame $N\in C^{k-1,\alpha}(\overline B,\mathbb R^{4\times 2})$ satisfying the Euler-Lagrange system (\ref{euler_lagrange}) and minimizing ${\mathcal T}_X,$ i.e.
  $${\mathcal T}_X(\widetilde N)-{\mathcal T}_X(N)\ge 0$$
for all normal frames $\widetilde N$.
\end{theorem}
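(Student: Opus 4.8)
The idea is to run exactly the reduction prepared above. First I would fix an arbitrary reference normal frame $\widetilde N\in C^{k-1,\alpha}(\overline B,\mathbb R^{4\times 2})$ — recall that every such immersion admits a normal frame of this class — and look for the desired Coulomb frame in the rotated form \eqref{so2} with an angle $\varphi$ still to be determined. By the transformation law \eqref{transformation_r4} combined with Proposition \ref{p1}, the frame $N$ defined through \eqref{so2} is a normal Coulomb frame precisely when $\varphi$ solves the Neumann boundary value problem \eqref{neumann_r4}.

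Hence the first step is to solve \eqref{neumann_r4} with optimal regularity. Since $\widetilde N\in C^{k-1,\alpha}(\overline B)$, the torsion coefficients $\widetilde T_{1,i}^2=\langle\widetilde N_{1,u^i},\widetilde N_2\rangle$ lie in $C^{k-2,\alpha}(\overline B)$, so that $f\in C^{k-3,\alpha}(\overline B)$ and $g\in C^{k-2,\alpha}(\partial B)$, with $k-3\ge 0$. The Gauss divergence theorem furnishes the solvability condition $\iint_B f\,du\,dv=\int_{\partial B}g\,ds$, and classical potential theory — Schauder estimates for the Neumann problem on the disc — then yields a solution $\varphi\in C^{k-1,\alpha}(\overline B)$, unique up to an inessential additive constant. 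Since $\cos$ and $\sin$ are real-analytic, the rotation matrix in \eqref{so2} lies in $C^{k-1,\alpha}(\overline B,SO(2))$, whence $N\in C^{k-1,\alpha}(\overline B,\mathbb R^{4\times 2})$; and $N$ satisfies \eqref{euler_lagrange} by construction.

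It remains to verify that this $N$ is $\mathcal T_X$-minimising. In $\mathbb R^4$ every normal frame $\widetilde N$ of $X$ differs from $N$ by an $SO(2)$-rotation; as $\overline B$ is simply connected, this rotation is described by a single angle $\psi\in C^2(\overline B,\mathbb R)$, and by \eqref{transformation_r4} the torsion coefficients of $\widetilde N$ are $T_{1,i}^2+\psi_{u^i}$. Substituting this into the conformal form of the functional and expanding the squares gives
\begin{equation*}
  \mathcal T_X(\widetilde N)-\mathcal T_X(N)
  =4\iint_B\big(T_{1,1}^2\psi_u+T_{1,2}^2\psi_v\big)\,du\,dv+2\iint_B\big(\psi_u^2+\psi_v^2\big)\,du\,dv .
\end{equation*}
An integration by parts turns the first integral into a boundary term over $\partial B$ plus $-\iint_B\psi\,\mbox{div}\,(T_{1,1}^2,T_{1,2}^2)\,du\,dv$, and both vanish because $N$ satisfies \eqref{euler_lagrange}; therefore $\mathcal T_X(\widetilde N)-\mathcal T_X(N)=2\iint_B(\psi_u^2+\psi_v^2)\,du\,dv\ge 0$. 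I do not expect a genuine obstacle in this codimension: the normal gauge group is abelian, so the Euler--Lagrange system collapses to a scalar linear Neumann problem and everything is ``soft''. The only mildly delicate points are the divergence-theorem compatibility check for \eqref{neumann_r4} and the short expansion establishing the minimising property; the real difficulties, which stem from the non-commutativity of $SO(n)$ when $n\ge 3$, are exactly what the subsequent sections address.
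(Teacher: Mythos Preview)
Your proposal is correct and follows essentially the same route as the paper: reduce to the scalar Neumann problem \eqref{neumann_r4} via the $SO(2)$-transformation law, check the compatibility condition by the divergence theorem, and invoke classical potential theory for the regularity $\varphi\in C^{k-1,\alpha}(\overline B)$. Your explicit expansion for the minimising property is precisely the computation the paper defers to \cite{froehlich_mueller_2007}, so you have in fact supplied slightly more detail than the text itself.
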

\noindent
The minimizing character of a normal Coulomb frame (for codimension $n=2$) can be deduced easily from (\ref{transformation_r4}), see \cite{froehlich_mueller_2007} for details.
% The minimizing character of a normal Coulomb frame follows immediately from the convexity of the functional of total torsion. For details we refer to \cite{froehlich_mueller_2007}.
\section{Surfaces in $\mathbb R^{n+2}$}
\label{section_4}
\setcounter{equation}{0}
\subsection{Geometry of the normal curvature tensor}\label{subsection_4.1}
Before we come to the promised existence and regularity results for normal Coulomb frames in higher codimensions it is necessary to clarify the nature of the curvature tensor of the normal connection $T_{\sigma,i}^\vartheta.$ For this purpose, we again fix a normal frame $\widetilde N\in C^{k-1,\alpha}(\overline B,\mathbb R^{(n+2)\times n})$ and consider the transformation
\begin{equation}\label{transform_N}
  N_\sigma=\sum_{\vartheta=1}^nR_\sigma^\vartheta\widetilde N_\vartheta
\end{equation}
with some orthogonal mapping $R=(R_\sigma^\vartheta)_{\sigma,\vartheta=1,\ldots,n}\in C^2(\overline B,SO(n)).$
\begin{definition}
We additionally set
  $$T_i=(T_{\sigma,i}^\vartheta)_{\sigma,\vartheta=1,2,\ldots}\,,\quad
    S_{12}=(S_{\sigma,12}^\vartheta)_{\sigma,\vartheta=1,2,\ldots}\,.$$
\end{definition}
\noindent
For $n=2$, the matrix $S_{12}$ can be easily seen to be invariant under rotations. This behaviour of $S_{12}$ changes for higher codimension: It turns out that only the length of $S_{12}$ remains invariant. Our next result contains this $L^\infty$-invariance of $S_{12}$ which is crucial for our main regularity result.
\begin{theorem}
\label{curvature_vector}
For a fixed normal frame $\widetilde N\in C^{k-1,\alpha}(\overline B,\mathbb R^{(n+2)\times n})$ define $N\in C^2(\overline B,\mathbb R^{(n+2)\times n})$ by (\ref{transform_N}) with some rotation $R=(R_\sigma^\vartheta)_{\sigma,\vartheta=1,\ldots,n}\in C^2(\overline B,SO(n)).$ Then there holds
 \begin{equation}\label{transform_S}
  S_{12}=R\circ\widetilde S_{12}\circ R^t
 \end{equation}
for the corresponding curvatures. In particular, the length $|S_{12}|$ is invariant under rotations.
\end{theorem}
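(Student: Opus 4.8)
The plan is to reduce the statement to the transformation law for the torsion coefficients under the rotation (\ref{transform_N}) and then to exhibit the familiar phenomenon that, although a connection is \emph{not} a tensor, its curvature \emph{is}: the inhomogeneous (derivative-of-$R$) contributions that spoil tensoriality of the $T_i$ cancel precisely in the combination defining $S_{12}$. Length invariance is then a one-line consequence of orthogonality of $R$. No regularity issues arise, since $\widetilde N\in C^{k-1,\alpha}\subset C^2$ and $R\in C^2$ supply exactly the derivatives needed to form $S_{12}$ and $\widetilde S_{12}$.

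First I would differentiate (\ref{transform_N}), take the inner product with $N_\tau=\sum_\rho R_\tau^\rho\widetilde N_\rho$, and use $\langle\widetilde N_\vartheta,\widetilde N_\rho\rangle=\delta_{\vartheta\rho}$ together with the definition (\ref{1.2}) of the $\widetilde T_{\vartheta,i}^\rho$ to obtain, in the matrix notation of the preceding Definition,
\begin{equation*}
  T_i=R_{,u^i}\circ R^t+R\circ\widetilde T_i\circ R^t\,,\qquad i=1,2\,.
\end{equation*}
Differentiating $R\circ R^t=R^t\circ R=\mathrm{id}$ yields $R_{,u^i}\circ R^t\in so(n)$ as well as $R^t\circ R_{,u^i}=-R^t_{,u^i}\circ R$; these identities are exactly what drive the cancellations. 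Next I would insert the displayed formula into $S_{12}=T_{1,u^2}-T_{2,u^1}+T_1\circ T_2-T_2\circ T_1$ (the matrix form of (\ref{s_12})), expand, and sort the resulting terms by how many derivatives fall on $R$. Writing $\alpha_i:=R_{,u^i}\circ R^t\in so(n)$ and $\beta_i:=R\circ\widetilde T_i\circ R^t$, the purely $\alpha$-part of the expansion is $\alpha_{1,u^2}-\alpha_{2,u^1}+\alpha_1\circ\alpha_2-\alpha_2\circ\alpha_1$, which vanishes identically (Maurer--Cartan equation for the pure-gauge connection); the terms with no derivative of $R$ reassemble, after contracting adjacent $R^t\circ R=\mathrm{id}$ pairs, into $R\circ\bigl(\widetilde T_{1,u^2}-\widetilde T_{2,u^1}+\widetilde T_1\circ\widetilde T_2-\widetilde T_2\circ\widetilde T_1\bigr)\circ R^t=R\circ\widetilde S_{12}\circ R^t$; and the remaining terms, which are linear in a single derivative of $R$ and come partly from $\partial(\beta_i)$ and partly from the mixed products $\alpha_i\circ\beta_j$ inside the commutator, cancel against one another once $R^t\circ R=\mathrm{id}$ and $R^t\circ R_{,u^i}=-R^t_{,u^i}\circ R$ are used. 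Collecting the pieces gives (\ref{transform_S}).

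Finally, for each fixed $w\in\overline B$ the map $A\mapsto R(w)\circ A\circ R(w)^t$ is an isometry of $(\mathbb R^{n\times n},\langle\cdot,\cdot\rangle)$: by cyclicity of the trace and $R^t\circ R=\mathrm{id}$ one has $\langle R\circ A\circ R^t,R\circ B\circ R^t\rangle=\mathrm{trace}(R\circ A\circ B^t\circ R^t)=\mathrm{trace}(A\circ B^t)=\langle A,B\rangle$, hence $|S_{12}(w)|=|\widetilde S_{12}(w)|$ pointwise, which is the asserted $L^\infty$-invariance. The conceptual content is classical, so the one place to be careful is the bookkeeping in the expansion of $S_{12}$: keeping straight which of $R$, $R^t$, $R_{,u^i}$, $\widetilde T_i$ occupies which slot, and systematically using $R\circ R^t=\mathrm{id}$ (equivalently $R_{,u^i}\circ R^t+R\circ R^t_{,u^i}=0$) to collapse the abundance of terms that are first order in the derivatives of $R$. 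Organizing the computation by the number of derivatives on $R$ — two (cancel by equality of mixed partials), zero (produce $R\circ\widetilde S_{12}\circ R^t$), one (cancel among themselves) — keeps these cancellations transparent and avoids a brute-force mess.
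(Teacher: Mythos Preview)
Your proposal is correct and follows essentially the same route as the paper: derive the transformation law $T_i=R_{,u^i}\circ R^t+R\circ\widetilde T_i\circ R^t$, substitute into the matrix form of $S_{12}$, and verify that all terms involving derivatives of $R$ cancel, leaving $R\circ\widetilde S_{12}\circ R^t$. The paper carries this out as a direct expansion (using the form $S_{12}=T_{1,v}-T_{2,u}-T_1\circ T_2^t+T_2\circ T_1^t$ and invoking $\widetilde T_i^t=-\widetilde T_i$ at the end), whereas you organize the same computation via the decomposition $T_i=\alpha_i+\beta_i$ and the Maurer--Cartan identity for the pure-gauge part; this is a tidier bookkeeping of the identical cancellations, not a different argument.
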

\begin{proof}
First we note
  $$\begin{array}{lll}
      \displaystyle
      T_{\sigma,i}^\vartheta\negthickspace
      & = & \negthickspace\displaystyle
            \langle
              N_{\sigma,u^i},N_\vartheta
            \rangle
            \,=\,\Big\langle
                   \sum_{\alpha=1}^n
                   \big(R_{\sigma,u^i}^\alpha\widetilde N_\alpha+R_\sigma^\alpha\widetilde N_{\alpha,u^i}\big)\,,
                   \sum_{\beta=1}^nR_\vartheta^\beta\widetilde N_\beta
                 \Big\rangle \\[3ex]
     & = & \displaystyle\negthickspace
           \sum_{\alpha,\beta=1}^n
           \big(R_{\sigma,u^i}^\alpha R_\vartheta^\beta\delta_{\alpha\beta}
            +R_\sigma^\alpha R_\vartheta^\beta\widetilde T_{\alpha,i}^\beta\big)
           \,=\,\sum_{\alpha=1}^n
                R_{\sigma,u^i}^\alpha (R^t)_\alpha^\vartheta
                 +\sum_{\alpha,\beta=1}^nR_\sigma^\alpha\widetilde T_{\alpha,i}^\beta(R^t)_\beta^\vartheta
    \end{array}$$
due to $R_\vartheta^\alpha=(R^t)_\alpha^\vartheta.$ Thus, we arrive at the concise transformation rule
\begin{equation}\label{transform_T}
  T_i=R_{u^i}\circ R^t+R\circ\widetilde T_i\circ R^t\,.
\end{equation}
Using this formula we now evaluate $S_{12}=T_{1,v}-T_{2,u}-T_1\circ T_2^t+T_2\circ T_1^t:$ First
  $$\begin{array}{lll}
      T_{1,v}-T_{2,u}
      & = & \displaystyle\negthickspace
            (R_u\circ R^t+R\circ\widetilde T_1\circ R^t)_v
            -(R_v\circ R^t+R\circ\widetilde T_2\circ R^t)_u \\[2ex]
      & = & \displaystyle\negthickspace
            R_u\circ R_v^t-R_v\circ R_u^t
            +R\circ(\widetilde T_{1,v}-\widetilde T_{2,u})\circ R^t \\[2ex]
      &   & \displaystyle\negthickspace
            +\,R_v\circ\widetilde T_1\circ R^t
            +R\circ\widetilde T_1\circ R_v^t
            -R_u\circ\widetilde T_2\circ R^t
            -R\circ\widetilde T_2\circ R_u^t\,,
    \end{array}$$
and next
  $$\begin{array}{lll}
      T_1\circ T_2^t-T_2\circ T_1^t\negthickspace
      & = & \displaystyle\negthickspace
            (R_u\circ R^t+R\circ\widetilde T_1\circ R^t)
            \circ(R\circ R_v^t+R\circ\widetilde T_2^t\circ R^t) \\[2ex]
      &   & \displaystyle\negthickspace
            -\,(R_v\circ R^t+R\circ\widetilde T_2\circ R^t)
               \circ(R\circ R_u^t+R\circ\widetilde T_1^t\circ R^t) \\[2ex]
      & = & \displaystyle\negthickspace
            R_u\circ R_v^t
            +R_u\circ\widetilde T_2^t\circ R^t
            +R\circ\widetilde T_1\circ R_v^t
            +R\circ\widetilde T_1\circ\widetilde T_2^t\circ R^t \\[2ex]
      &   & \displaystyle\negthickspace
            -\,R_v\circ R_u^t
            -R_v\circ\widetilde T_1^t\circ R^t
            -R\circ\widetilde T_2\circ R_u^t
            -R\circ\widetilde T_2\circ\widetilde T_1^t\circ R^t\,
    \end{array}$$
on account of $R\circ R^t=R^t\circ R=\mbox{id}.$ Taking both identities together, we arrive at
  $$\begin{array}{l}
      T_{1,v}-T_{2,u}-T_1\circ T_2^t+T_2\circ T_1^t \\[2ex]
      \hspace*{3ex}\displaystyle
      =\,R\circ
         (\widetilde T_{1,v}-\widetilde T_{2,u}-
         \widetilde T_1\circ\widetilde T_2^t+\widetilde T_2\circ\widetilde T_1^t)
         \circ R^t \\[2ex]
      \hspace*{6ex}\displaystyle
      +\,R_v\circ\widetilde T_1\circ R^t+R\circ\widetilde T_1\circ R_v^t
      -R_u\circ\widetilde T_2\circ R^t-R\circ\widetilde T_2\circ R_u^t \\[2ex]
      \hspace*{6ex}\displaystyle
      -\,R_u\circ\widetilde T_2^t\circ R^t-R\circ\widetilde T_1\circ R_v^t
      +R_v\circ\widetilde T_1^t\circ R^t+R\circ\widetilde T_2\circ R_u^t \\[2ex]
      \hspace*{3ex}\displaystyle
      =\,R\circ
         (\widetilde T_{1,v}-\widetilde T_{2,u}
         -\widetilde T_1\circ\widetilde T_2^t+\widetilde T_2\circ\widetilde T_1^t)
         \circ R^t
    \end{array}$$
due to $\widetilde T_i=-\widetilde T_i^t$. This proves the statement.
\end{proof}
\begin{remark}
\label{remark_w21}
Note that the above calculations remain valid for rotations $R\in W_{loc}^{2,1}(B,SO(n))\cap W^{1,2}(B,SO(n))$. 
\end{remark}
\noindent
The transformation rule (\ref{transform_S}) gives rise to the definition of a geometric curvature quantity associated with the normal bundle of surfaces:
\begin{corollary}\label{c1}
The \emph{normal curvature vector}\footnote{Note that $(N_\sigma\wedge N_\vartheta)_{1\le\sigma<\vartheta\le n}$ forms a basis in the Grassmanian space $\mathbb R^\frac{n(n+1)}{2}$; thus we can interprete $\mathcal S$ as a vector in $\mathbb R^\frac{n(n+1)}{2}$ with components $(2S_{\sigma,12}^\vartheta)_{1\le\sigma<\vartheta\le n}$. We refer to \cite{Heil_01} for details.}
  $${\mathcal S}
    :=\frac{1}{W}\,
      \sum_{\sigma,\vartheta=1}^n
      S_{\sigma,12}^\vartheta\,N_\sigma\wedge N_\vartheta$$
is invariant w.r.t.~rotations and positively oriented parameter transformations. Here, $\wedge$ denotes the outer product or wedge product between vectors in $\mathbb R^{n+2},$ and $W$ is the area element of the immersion $X.$
\end{corollary}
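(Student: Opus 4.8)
The corollary packages two independent invariance statements, and my plan is to reduce each of them to a transformation law that is either already proved above or follows from a routine chain‑rule computation. Concretely: invariance under rotations will follow directly from the transformation rule \eqref{transform_S} of Theorem \ref{curvature_vector} together with \eqref{transform_N} and the orthogonality of $R$; invariance under positively oriented parameter changes will follow from the facts that the bivectors $N_\sigma\wedge N_\vartheta$ are geometric objects attached to the surface point, that $(S_{\sigma,ij}^\vartheta)$ is a genuine antisymmetric $2$‑tensor in $i,j$, and that the area element $W$ transforms by the absolute value of the Jacobian.

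For the rotation invariance I would simply substitute. Writing $N_\sigma=\sum_\gamma R_\sigma^\gamma\widetilde N_\gamma$ from \eqref{transform_N} and $S_{\sigma,12}^\vartheta=\sum_{\alpha,\beta}R_\sigma^\alpha R_\vartheta^\beta\widetilde S_{\alpha,12}^\beta$ from \eqref{transform_S}, one obtains
\begin{equation*}
  \sum_{\sigma,\vartheta=1}^n S_{\sigma,12}^\vartheta\,N_\sigma\wedge N_\vartheta
  =\sum_{\alpha,\beta,\gamma,\delta=1}^n\widetilde S_{\alpha,12}^\beta
   \Big(\sum_{\sigma=1}^n R_\sigma^\alpha R_\sigma^\gamma\Big)
   \Big(\sum_{\vartheta=1}^n R_\vartheta^\beta R_\vartheta^\delta\Big)\,
   \widetilde N_\gamma\wedge\widetilde N_\delta,
\end{equation*}
and since $R^tR=\mbox{id}$ the two inner sums collapse to $\delta^{\alpha\gamma}$ and $\delta^{\beta\delta}$, leaving $\sum_{\alpha,\beta}\widetilde S_{\alpha,12}^\beta\,\widetilde N_\alpha\wedge\widetilde N_\beta$. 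Because $W=\sqrt{g_{11}g_{22}-g_{12}^2}$ is manufactured from the first fundamental form alone and is therefore insensitive to the choice of normal frame, this already gives $\mathcal S=\widetilde{\mathcal S}$. By Remark \ref{remark_w21} the same reasoning remains valid for rotations of the Sobolev class used later, which is what makes the statement useful for the regularity argument.

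For invariance under an orientation‑preserving parameter change $w=(u,v)\mapsto\tilde w=(\tilde u,\tilde v)$ I would argue as follows. The normal vectors $N_\sigma$ and hence the wedge products $N_\sigma\wedge N_\vartheta\in\Lambda^2\mathbb R^{n+2}$ depend only on the point of the immersion, not on its parametrization, so everything rests on the behaviour of the quotient $S_{\sigma,12}^\vartheta/W$. The torsion coefficients $T_{\sigma,i}^\vartheta=\langle N_{\sigma,u^i},N_\vartheta\rangle$ transform by the chain rule as a covector in the lower index $i$, i.e.\ $\widetilde T_{\sigma,\tilde\imath}^\vartheta=\sum_k(\partial u^k/\partial\tilde u^{\tilde\imath})\,T_{\sigma,k}^\vartheta$; inserting this into \eqref{normal_curvature_tensor} and carrying out the standard cancellation in which the inhomogeneous terms generated by differentiating the covector law are annihilated by the quadratic terms $\sum_\omega(T_{\sigma,i}^\omega T_{\omega,j}^\vartheta-T_{\sigma,j}^\omega T_{\omega,i}^\vartheta)$, one sees that $(S_{\sigma,ij}^\vartheta)$ is a genuine $2$‑covariant tensor, antisymmetric in $i,j$. (More cheaply, one may identify $S_{\sigma,12}^\vartheta=\langle R^\perp(X_u,X_v)N_\sigma,N_\vartheta\rangle$ with the normal curvature operator $R^\perp$ evaluated on the tangent frame, which renders the tensor character manifest.) For an antisymmetric $2$‑tensor in two variables the single surviving component is multiplied by the Jacobian determinant $J:=\partial(u,v)/\partial(\tilde u,\tilde v)$, so $\widetilde S_{\sigma,12}^\vartheta=J\,S_{\sigma,12}^\vartheta$; on the other hand $W\,du\,dv$ is the invariant area form, hence $\widetilde W=|J|\,W$. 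For $J>0$ the factors cancel, $\widetilde S_{\sigma,12}^\vartheta/\widetilde W=S_{\sigma,12}^\vartheta/W$, and $\mathcal S$ is invariant.

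The only nonroutine point is the tensoriality of the combination \eqref{normal_curvature_tensor}: it is exactly the classical ``curvature of a connection is a tensor'' cancellation, and this is where the bulk of the work sits if one wants a self‑contained proof rather than quoting the normal curvature operator $R^\perp$. I would also flag that positivity of the orientation is genuinely needed — under an orientation‑reversing reparametrization $W$ is unchanged while $S_{\sigma,12}^\vartheta$ changes sign, so $\mathcal S$ is then invariant only up to sign.
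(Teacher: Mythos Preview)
Your treatment of rotation invariance is essentially the paper's own argument: both insert \eqref{transform_S} and \eqref{transform_N} and collapse the double sum via $R^tR=\mbox{id}$.

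For parameter invariance your route differs from the paper's. The paper does not argue tensoriality of $S_{\sigma,ij}^\vartheta$ from the definition \eqref{normal_curvature_tensor}; instead it invokes the Ricci integrability identity
\[
  S_{\sigma,ij}^\vartheta=\sum_{k,\ell=1}^2(L_{\sigma,ik}L_{\vartheta,j\ell}-L_{\sigma,jk}L_{\vartheta,i\ell})g^{k\ell},
\]
which expresses $S$ purely through the second fundamental forms $L_{\sigma,ij}=\langle N_\sigma,X_{u^iu^j}\rangle$ and the inverse metric. Since the $L_{\sigma,ij}$ are manifestly covariant $2$-tensors under reparametrization and $g^{k\ell}$ is contravariant, the transformation law $S_{\sigma,12}^\vartheta=\widehat S_{\sigma,12}^\vartheta\cdot\det(\partial\widehat u/\partial u)$ drops out with no cancellation argument at all. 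Your direct chain-rule approach also works, but your description of the mechanism is inaccurate: under a base coordinate change the $T_{\sigma,i}^\vartheta$ transform \emph{homogeneously} as a $1$-form, so the only extra terms generated in $T_{\sigma,i,u^j}^\vartheta-T_{\sigma,j,u^i}^\vartheta$ are the second-derivative pieces $\sum_k(\partial^2 u^k/\partial\tilde u^i\partial\tilde u^j)\,T_{\sigma,k}^\vartheta$, and these cancel by symmetry of mixed partials alone --- not against the quadratic part, which transforms as a $2$-form separately. Your alternative via the normal curvature operator $R^\perp$ is clean and sidesteps this issue. The paper's route via the Ricci identity has the advantage of reducing everything to quantities whose transformation behaviour is standard; your direct route is more self-contained once the cancellation is stated correctly.
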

\begin{proof}
Let us first check the invariance w.r.t.~rotations: From (\ref{transform_S}) we infer
\begin{equation*}
\begin{array}{lll}
  \displaystyle
  \sum_{\sigma,\vartheta=1}^n
  S_{\sigma,12}^\vartheta\,N_\sigma\wedge N_\vartheta
  & = & \displaystyle
        \sum_{\sigma,\vartheta=1}^n
        \sum_{\alpha,\beta=1}^n
        S_{\sigma,12}^\vartheta R_\sigma^\alpha R_\vartheta^\beta\,\widetilde N_\alpha\wedge\widetilde N_\beta \\[4ex]
  & = & \displaystyle
        \sum_{\sigma,\vartheta=1}^n
        \sum_{\alpha,\beta=1}^n
        (R^t)_\alpha^\sigma S_{\sigma,12}^\vartheta R_\vartheta^\beta\,\widetilde N_\alpha\wedge\widetilde N_\beta \\[4ex]
  & = & \displaystyle
        \sum_{\alpha,\beta=1}^n
        \widetilde S_{\alpha,12}^\beta\,\widetilde N_\alpha\wedge\widetilde N_\beta\,.
\end{array}
\end{equation*}
Now we verify the parameter invariance of ${\mathcal S}:$ Let $u^i(\widehat u^m),$ $i=1,2$ and $m=1,2,$ be a positively oriented parameter transformation. Note that by construction it does not affect the normal frame. We should compute the transformation taking the Ricci integrability conditions (see e.g. Chen \cite{chen_1973})
  $$S_{\sigma,ij}^\vartheta
    =\sum_{k,\ell=1}^2
     (L_{\sigma,ik}L_{\vartheta,j\ell}-L_{\sigma,jk}L_{\vartheta,i\ell})g^{k\ell}$$
into account; here $L_{\sigma,ik}:=-\langle N_{\sigma,u^i},X_{u^k}\rangle$ denote the coefficients of the second fundamental form w.r.t.~$N_\sigma$. Writing $\widehat S_{\sigma,mp}^\vartheta$ for the normal curvature tensor computed in the parameters $\widehat u^m$, we arrive at the transformation rule
  $$S_{\sigma,ij}^\vartheta
    =\sum_{m,p=1}^2
     \widehat S_{\sigma,mp}^\vartheta\,\frac{\partial\widehat u^m}{\partial u^i}\,\frac{\partial\widehat u^p}{\partial u^j}\,,\quad
    S_{\sigma,12}^\vartheta
    =\widehat S_{\sigma,12}^\vartheta
     \left(
       \frac{\partial\widehat u^1}{\partial u^1}\,\frac{\partial\widehat u^2}{\partial u^2}
       -\frac{\partial\widehat u^1}{\partial u^2}\,\frac{\partial\widehat u^2}{\partial u^1}
     \right).$$
Thus,  $W^{-1}S_{\sigma,12}^\vartheta$ and $\mathcal S$ are invariant w.r.t.~such parameter transformations.
\end{proof}
\noindent
The squared length $|{\mathcal S}|^2$ of the normal curvature vector is usually called the {\it normal curvature of the surface,} see e.g.~\cite{chen_ludden_1972}. It seems promising to us to study {\it surfaces with prescribed normal curvature vector} ${\mathcal S}$ in analogy to surfaces with prescribed mean curvature vector. 
\subsection{Existence of weak normal Coulomb frames}\label{ss4.1}
In \cite{helein_2002} Lemma 4.1.3, H\'elein proved existence of weak Coulomb frames in the tangential bundle of a manifold. His method can be adapted to our situation. For reasons of completeness we carry out the arguments.
\begin{proposition}\label{prop_exist}
There exists a weak normal Coulomb frame $N\in W^{1,2}(B)\cap L^\infty(B)$ minimizing the functional $\mathcal T_X$ of total torsion in the set of all weak normal frames of class $W^{1,2}(B)\cap L^\infty(B)$. 
\end{proposition}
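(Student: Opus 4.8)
The plan is to follow the direct method of the calculus of variations applied to the functional $\mathcal T_X$ over the admissible class of weak normal frames. First I would fix a reference frame $\widetilde N \in C^{k-1,\alpha}(\overline B, \mathbb R^{(n+2)\times n})$, which exists by the remarks in Subsection 1.2, and parametrize every competing weak normal frame in the form $N_\sigma = \sum_{\vartheta} R_\sigma^\vartheta \widetilde N_\vartheta$ with $R = (R_\sigma^\vartheta) \in W^{1,2}(B, SO(n))$. The constraint $N \in L^\infty$ is then automatic since $|N_\sigma| = 1$ pointwise, and the condition $N \in W^{1,2}(B)$ translates (via the transformation rule \eqref{transform_T}, namely $T_i = R_{u^i}\circ R^t + R\circ \widetilde T_i \circ R^t$, together with the boundedness of $\widetilde T_i$) into $R \in W^{1,2}(B, SO(n))$. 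Thus the problem becomes: minimize $\mathcal T_X$, expressed through the $T_i$ above, over the weakly closed, nonempty set $\mathcal A := W^{1,2}(B, SO(n))$.

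Next I would establish coercivity and lower semicontinuity. Since $|T_i| \le |R_{u^i}| + |\widetilde T_i|$ and conversely $|R_{u^i}| = |T_i - R\circ\widetilde T_i\circ R^t| \le |T_i| + |\widetilde T_i|$, the functional $\mathcal T_X(N) = \sum_i \int_B |T_i|^2\, du\,dv$ controls, up to the fixed bounded term $\int_B |\widetilde T_i|^2$, the Dirichlet energy $\int_B |DR|^2$. Hence a minimizing sequence $R_k$ has $\int_B |DR_k|^2$ uniformly bounded; combined with the pointwise bound $|R_k| \le \sqrt n$ (entries of $SO(n)$ matrices), $R_k$ is bounded in $W^{1,2}(B, \mathbb R^{n\times n})$, so after passing to a subsequence $R_k \rightharpoonup R$ weakly in $W^{1,2}$ and (by Rellich) strongly in $L^2$ and a.e. The a.e. convergence preserves the pointwise constraint $R_k^t R_k = \mathrm{id}$, $\det R_k = 1$, so $R \in W^{1,2}(B, SO(n)) = \mathcal A$. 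For lower semicontinuity one writes $\mathcal T_X(N_k) = \int_B \big(|DR_k|^2 + 2\langle DR_k, Q_k\rangle + |\widetilde T|^2\big)$ where $Q_k$ depends on $R_k$ through the bounded quantities $R_k\circ\widetilde T_i\circ R_k^t$; the first term is weakly lower semicontinuous, and the cross term converges because $Q_k \to Q$ strongly in $L^2$ (by a.e. convergence and dominated convergence, using $|\widetilde T_i| \in L^\infty$) while $DR_k \rightharpoonup DR$ weakly in $L^2$. Therefore $\mathcal T_X(N) \le \liminf \mathcal T_X(N_k) = \inf$, so the limit $N$ is a minimizer.

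Finally, that the minimizer $N$ is a \emph{weak normal Coulomb frame} follows from the first-variation computation of Section 2: it is valid for $W^{1,2}\cap L^\infty$ frames as noted in Remark \ref{weak_coulomb}, so a minimizer satisfies the weak Euler--Lagrange equations \eqref{weak_euler} for all $\sigma,\vartheta$, which is precisely the defining property of a weak normal Coulomb frame. The main obstacle is the treatment of the cross term in the energy under weak convergence: one must be careful that the quadratic-in-$R$ coefficient multiplying $DR_k$ is genuinely compact, which is why the $L^\infty$ bound on $\widetilde T_i$ (equivalently, the $C^{k-1,\alpha}$, hence bounded, reference frame) is essential; without the a.e. convergence of $R_k$ and the resulting strong $L^2$-convergence of $R_k\circ\widetilde T_i\circ R_k^t$, lower semicontinuity could fail. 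A secondary point to verify carefully is that $W^{1,2}(B,SO(n))$ is indeed weakly closed in $W^{1,2}(B,\mathbb R^{n\times n})$, which is the a.e.-convergence argument just sketched.
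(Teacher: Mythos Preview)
Your argument is correct and follows essentially the same route as the paper: reduce to minimizing over $R\in W^{1,2}(B,SO(n))$ via the transformation rule (\ref{transform_T}), obtain coercivity from $|R_{u^i}|\le|T_i|+|\widetilde T_i|$, extract a weakly convergent subsequence with a.e.\ limit in $SO(n)$, and combine weak lower semicontinuity of $\int|DR_k|^2$ with strong $L^2$-convergence of the $R_k$-dependent coefficient in the cross term. The only cosmetic slip is that the cross term is $\langle R_{u^i},\,R\circ\widetilde T_i\rangle$ rather than involving $R_k\circ\widetilde T_i\circ R_k^t$, but the dominated-convergence reasoning you give applies verbatim.
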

\begin{proof}
We fix some normal frame $\widetilde N\in C^{k-1,\alpha}(\overline B)$ and interpret $\mathcal T_X$ as a functional ${\mathcal F}(R)$ of rotations $R=(R_\sigma^\vartheta)_{\sigma,\vartheta=1,\ldots,n}\in W^{1,2}(B,SO(n))$ by setting
  $$\mathcal F(R)=\sum_{\sigma,\vartheta=1}^n\sum_{i=1}^2\int\hspace{-1.3ex}\int\limits_{\hspace{-1.8ex}B}
    (T_{\sigma,i}^\vartheta)^2\,dudv=\int\hspace{-1.3ex}\int\limits_{\hspace{-1.8ex}B}\big(|T_1|^2+|T_2|^2\big)\,dudv,
    \quad N_\sigma:=\sum_{\vartheta=1}^nR_\sigma^\vartheta\widetilde N_\vartheta.$$ 
Choose a minimizing sequence ${}^{\scriptscriptstyle \ell}\!R=({}^{\scriptscriptstyle \ell}\!R_\sigma^\vartheta)_{\sigma,\vartheta=1,\ldots,n}\in W^{1,2}(B,SO(n))$ and define ${}^{\scriptscriptstyle \ell}\!N_\sigma:=\sum\limits_{\vartheta=1}^n{}^{\scriptscriptstyle \ell}\!R_\sigma^\vartheta\widetilde N_\vartheta$. As in (\ref{transform_T}) we find ${}^{\scriptscriptstyle \ell}T_i={}^{\scriptscriptstyle \ell}\!R_{u^i}\circ{}^{\scriptscriptstyle \ell}\!R^t+{}^{\scriptscriptstyle \ell}\!R\circ \widetilde T_i\circ{}^{\scriptscriptstyle \ell}\!R^t,$ and this implies
\begin{equation*}
\begin{array}{lll}
  {}^{\scriptscriptstyle \ell}T_i\circ{}^{\scriptscriptstyle \ell}T_i^t
  & = & \displaystyle
        ({}^{\scriptscriptstyle \ell}\!R_{u^i}\circ
         {}^{\scriptscriptstyle \ell}\!R^t
         +{}^{\scriptscriptstyle \ell}\!R\circ
          \widetilde T_i\circ
          {}^{\scriptscriptstyle \ell}\!R^t)\circ
        ({}^{\scriptscriptstyle \ell}\!R\circ
         {}^{\scriptscriptstyle \ell}\!R_{u^i}^t
         +{}^{\scriptscriptstyle \ell}\!R\circ
          \widetilde T_i^t\circ
          {}^{\scriptscriptstyle \ell}\!R^t) \\[2ex]
  & = & \displaystyle
        {}^{\scriptscriptstyle \ell}\!R_{u^i}\circ
        {}^{\scriptscriptstyle \ell}\!R_{u^i}^t
        +{}^{\scriptscriptstyle \ell}\!R\circ
         \widetilde T_i\circ
         {}^{\scriptscriptstyle \ell}\!R_{u^i}^t
        +{}^{\scriptscriptstyle \ell}\!R_{u^i}\circ
         \widetilde T_i^t\circ
         {}^{\scriptscriptstyle \ell}\!R^t
        +{}^{\scriptscriptstyle \ell}\!R\circ
         \widetilde T_i\circ
         \widetilde T_i^t\circ
         {}^{\scriptscriptstyle \ell}\!R^t.
\end{array}
\end{equation*}
In particular, we conclude
\begin{equation*}
  \mbox{trace}\,({}^{\scriptscriptstyle \ell}T_i
  \circ{}^{\scriptscriptstyle \ell}T_i^t)
  =\mbox{trace}\,
   ({}^{\scriptscriptstyle \ell}\!R_{u^i}
   \circ{}^{\scriptscriptstyle \ell}\!R_{u^i}^t)
   +2\,\mbox{trace}\,
    ({}^{\scriptscriptstyle \ell}\!R
    \circ \widetilde T_i\circ{}^{\scriptscriptstyle \ell}\!R_{u^i}^t)
   +\mbox{trace}\,(\widetilde T_i\circ \widetilde T_i^t)
\end{equation*}
or
\begin{equation}\label{torsion_sequence}
  |{}^{\scriptscriptstyle \ell}T_i|^2
  =|{}^{\scriptscriptstyle \ell}\!R_{u^i}|^2
   +2\langle{}^{\scriptscriptstyle \ell}\!R\circ\widetilde T_i,{}^{\scriptscriptstyle \ell}\!R_{u^i}\rangle
   +|\widetilde T_i|^2.
\end{equation}
Taking $|{}^{\scriptscriptstyle \ell}\!R\circ\widetilde T_i|=|\widetilde T_i|$ into account, we arrive at the estimate
\begin{equation}\label{estimate_r}
  |{}^{\scriptscriptstyle \ell}T_i|^2
  \ge\big(|\widetilde T_i|-|{}^{\scriptscriptstyle \ell}\!R_{u^i}|\big)^2\quad\mbox{a.e.~on}\ B, 
  \quad\mbox{for all}\ \ell\in\mathbb N.
\end{equation}
Now the $\widetilde T_i$ are bounded in $L^2(B).$ And since ${}^{\scriptscriptstyle \ell}\!R$ is minimizing for $\mathcal F$, the sequences ${}^{\scriptscriptstyle \ell}T_i$ are also bounded in $L^2(B).$ Thus, ${}^{\scriptscriptstyle \ell}\!R_{u^i}$ are bounded sequences in $L^2(B)$ in accordance with (\ref{estimate_r}). By Hilbert's selection theorem and Rellich's embedding theorem we find a subsequence, again denoted by ${}^{\scriptscriptstyle \ell}\!R$, which converges as follows:
\begin{equation*}
  {}^{\scriptscriptstyle \ell}\!R_{u^i}
  \rightharpoonup R_{u^i}
  \quad\mbox{weakly in}\ L^2(B,\mathbb R^{n\times n}),\quad
  {}^{\scriptscriptstyle \ell}\!R\rightarrow R
  \quad\mbox{strongly in}\ L^2(B,SO(n))
\end{equation*}
with some $R\in W^{1,2}(B,SO(n))$. In particular, we have ${}^{\scriptscriptstyle \ell}\!R\to R$ a.e.~on $B$ and 
  $$\lim_{\ell\to\infty}
	\int\hspace{-1.3ex}\int\limits_{\hspace{-1.8ex}B}
    |{}^{\scriptscriptstyle \ell}\!R\circ \widetilde T_i-R\circ\widetilde T_i|^2\,dudv=0$$
according to the dominated convergence theorem. Hence, we can compute in the limit
\begin{equation*}
\begin{array}{rcl}
  \displaystyle\lim_{\ell\to\infty}
  \int\hspace{-1.3ex}\int\limits_{\hspace{-1.8ex}B}
  \langle
    {}^{\scriptscriptstyle \ell}\!R\circ\widetilde T_i,{}^{\scriptscriptstyle \ell}\!R_{u^i}
  \rangle\,dudv
   & = & \displaystyle\lim_{\ell\to\infty}
  \Bigg(\!\int\hspace{-1.3ex}\int\limits_{\hspace{-1.8ex}B}
  \langle
    {}^{\scriptscriptstyle \ell}\!R\circ\widetilde T_i-R\circ\widetilde T_i,{}^{\scriptscriptstyle \ell}\!R_{u^i}
  \rangle\,dudv
  +\int\hspace{-1.3ex}\int\limits_{\hspace{-1.8ex}B}
     \langle R\circ\widetilde T_i,{}^{\scriptscriptstyle \ell} R_{u^i}
     \rangle\,dudv\!\Bigg)\\[4ex]
   & = & \displaystyle  
   \int\hspace{-1.3ex}\int\limits_{\hspace{-1.8ex}B}
     \langle R\circ\widetilde T_i,R_{u^i}
     \rangle\,dudv.
\end{array}
\end{equation*}
In addition, we obtain
\begin{equation*}
  \lim_{\ell\to\infty}
  \int\hspace{-1.3ex}\int\limits_{\hspace{-1.8ex}B}
  |{}^{\scriptscriptstyle \ell}\!R_{u^i}|^2\,dudv
  \ge\int\hspace{-1.3ex}\int\limits_{\hspace{-1.8ex}B}
     |R_{u^i}|^2\,dudv
\end{equation*}
due to the semicontinuity of the $L^2$-norm w.r.t.~weak convergence. Putting the last two relations into (\ref{torsion_sequence}), we finally infer
\begin{equation*}
  \lim_{\ell\to\infty}\mathcal F({}^{\scriptscriptstyle \ell}\!R)
  =\lim_{\ell\to\infty}
  \int\hspace{-1.3ex}\int\limits_{\hspace{-1.8ex}B}
  \big(|{}^{\scriptscriptstyle \ell}T_1|^2+|{}^{\scriptscriptstyle \ell}T_2|^2\big)\,dudv
  \ge\int\hspace{-1.3ex}\int\limits_{\hspace{-1.8ex}B}
  \big(|T_1|^2+|T_2|^2\big)\,dudv=\mathcal F(R),
\end{equation*}
where $T_i=(T_{\sigma,i}^\vartheta)_{\sigma,\vartheta=1,\ldots,n}$ denote the torsion coefficients of the frame $N$ with entries $N_\sigma:=\sum_\vartheta R_\sigma^\vartheta\widetilde N_\vartheta$ (note that the calculations leading to (\ref{torsion_sequence}) yield an analogous relation for $|T_i|^2$). Consequently, $N\in W^{1,2}(B)\cap L^\infty(B)$ minimizes $\mathcal T_X$ and, in particular, is a weak normal Coulomb frame; compare with Remark \ref{weak_coulomb}. 
\end{proof}
\subsection{Regularity of weak normal Coulomb frames}\label{ss4.2}
In order to prove our main existence result, Theorem \ref{existence_rn} below, it remains to show the smoothness of the weak normal Coulomb frame constructed in Proposition \ref{prop_exist}. We start with the following
\begin{lemma}\label{w_12}
Any weak normal Coulomb frame $N\in W^{1,2}(B)\cap L^\infty(B)$ belongs to the class $W^{2,1}_{loc}(B)$.
\end{lemma}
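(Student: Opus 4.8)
The plan is to exploit the two complementary facts established so far: the Euler--Lagrange system \eqref{euler_lagrange} says each vector field $(T_{\sigma,1}^\vartheta,T_{\sigma,2}^\vartheta)$ is divergence-free in $B$ with vanishing normal trace on $\partial B$, while the definition \eqref{s_12} of the normal curvature tensor encodes the ``curl'' of the same vector field. The key structural observation is that the nonlinear term in \eqref{s_12} has the form of a sum of Jacobian determinants (wedge products of gradients), which is exactly the setting for Wente's estimate. So first I would fix $\sigma,\vartheta$ and, using \eqref{euler_lagrange}, invoke Poincar\'e's lemma to write $T_{\sigma,1}^\vartheta=\phi_{u^2}$, $T_{\sigma,2}^\vartheta=-\phi_{u^1}$ for a scalar potential $\phi=\phi_{\sigma}^\vartheta\in W^{1,2}(B)$; the boundary condition $\langle(T_{\sigma,1}^\vartheta,T_{\sigma,2}^\vartheta),\nu\rangle=0$ on $\partial B$ translates into $\partial\phi/\partial\tau=0$ along $\partial B$, i.e.\ $\phi$ is constant on $\partial B$, so we may normalize $\phi=0$ on $\partial B$, giving a homogeneous Dirichlet condition.

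Next I would compute $\Delta\phi$. Since $\Delta\phi=\phi_{u^1u^1}+\phi_{u^2u^2}=T_{\sigma,1,u^2}^\vartheta-T_{\sigma,2,u^1}^\vartheta$, formula \eqref{s_12} gives
\begin{equation*}
  \Delta\phi_\sigma^\vartheta
  = S_{\sigma,12}^\vartheta-\sum_{\omega=1}^n\big(T_{\sigma,1}^\omega T_{\omega,2}^\vartheta-T_{\sigma,2}^\omega T_{\omega,1}^\vartheta\big).
\end{equation*}
The crucial point is that the summand $T_{\sigma,1}^\omega T_{\omega,2}^\vartheta-T_{\sigma,2}^\omega T_{\omega,1}^\vartheta$ is, up to the $SO(n)$ transformation rule \eqref{transform_T}, a finite sum of expressions of the form $a_{u^1}b_{u^2}-a_{u^2}b_{u^1}$ with $a,b\in W^{1,2}(B)$: indeed $T_i=R_{u^i}\circ R^t+R\circ\widetilde T_i\circ R^t$ with $R\in W^{1,2}\cap W^{2,1}_{loc}$ (the frame) and $\widetilde T_i$ smooth, so each product reduces either to a genuine Jacobian in the entries of $R$ or to a product of an $L^2$-gradient with a bounded smooth coefficient. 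Meanwhile $S_{\sigma,12}^\vartheta\in L^2(B)$ because $\widetilde T_i\in L^\infty$ and the $L^\infty$-invariance of $|S_{12}|$ from Theorem \ref{curvature_vector} (and Remark \ref{remark_w21}) keeps it controlled; actually for this lemma it suffices that the right-hand side lies in the Hardy space $\mathcal H^1(B)$ or at least that the Jacobian part does.

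Then I would apply Wente's inequality (in its $W^{2,1}$ form): if $\Delta\phi=\sum_k (a_k)_{u^1}(b_k)_{u^2}-(a_k)_{u^2}(b_k)_{u^1}$ in $B$ with $\phi=0$ on $\partial B$ and $a_k,b_k\in W^{1,2}(B)$, then $\phi\in W^{2,1}_{loc}(B)\cap C^0(\overline B)$ with the estimate $\|\phi\|_{W^{2,1}(B')}+\|\phi\|_{C^0}\le C\sum_k\|\nabla a_k\|_{L^2}\|\nabla b_k\|_{L^2}$. The smooth part $S_{\sigma,12}^\vartheta$ contributes an $L^2$ term to $\Delta\phi$, for which standard $L^p$ elliptic regularity gives $\phi\in W^{2,2}_{loc}\hookrightarrow W^{2,1}_{loc}$ anyway. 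Combining, $\phi_\sigma^\vartheta\in W^{2,1}_{loc}(B)$, hence $T_{\sigma,i}^\vartheta=\pm\phi_{u^j}\in W^{1,1}_{loc}(B)$ for all $\sigma,\vartheta,i$. Finally, I would recover $N$ from the torsion coefficients: the normal vectors satisfy the Weingarten-type ODE system $N_{\sigma,u^i}=-\sum_k L_{\sigma,ik}g^{k\ell}X_{u^\ell}+\sum_\vartheta T_{\sigma,i}^\vartheta N_\vartheta$, whose right-hand side now lies in $W^{1,1}_{loc}$ (the coefficients $L_{\sigma,ik}$, $g^{k\ell}$, $X_{u^\ell}$ are smooth, $N\in W^{1,2}\cap L^\infty$, and $T_{\sigma,i}^\vartheta\in W^{1,1}_{loc}$, and $W^{1,1}\cdot(W^{1,2}\cap L^\infty)\subset W^{1,1}_{loc}$ in two dimensions by Sobolev embedding), whence $N\in W^{2,1}_{loc}(B)$.

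The main obstacle is the second paragraph: verifying that the nonlinear term genuinely has Jacobian/div-curl structure after the frame rotation is inserted, so that Wente (rather than merely $L^2$ theory, which would only give $W^{2,p}$ for $p<2$ and not the borderline $W^{2,1}$) applies — and checking that the potential $\phi$ can legitimately be taken to vanish on all of $\partial B$ (this uses connectedness of $\partial B$ together with the vanishing tangential derivative). Everything else is bookkeeping with the transformation rule \eqref{transform_T} and routine elliptic and product estimates in dimension two.
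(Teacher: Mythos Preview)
Your overall architecture --- Poincar\'e's lemma to produce potentials $\phi_\sigma^\vartheta$ with zero Dirichlet data, a div--curl right-hand side handled by Wente/Hardy-space theory, and then the Weingarten equations to pass from $T_i\in W^{1,1}_{loc}$ back to $N\in W^{2,1}_{loc}$ --- is exactly the paper's plan. The difference lies in how you obtain the div--curl structure for $\Delta\phi_\sigma^\vartheta$, and this is where a genuine gap appears.

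You compute $\Delta\phi_\sigma^\vartheta$ via the curvature formula \eqref{s_12}, producing the inhomogeneity $S_{\sigma,12}^\vartheta$ plus a bilinear expression in the $T_i$, and then argue that the bilinear part is a sum of Jacobians through the transformation rule \eqref{transform_T} and that $S_{12}\in L^\infty$ by Theorem~\ref{curvature_vector} and Remark~\ref{remark_w21}. Both of these invocations are circular: Remark~\ref{remark_w21} explicitly requires $R\in W^{2,1}_{loc}(B,SO(n))$, which is equivalent to $N\in W^{2,1}_{loc}(B)$ --- precisely the conclusion of the lemma. Likewise your parenthetical ``$R\in W^{1,2}\cap W^{2,1}_{loc}$'' presupposes the result. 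At the stage of this lemma you only know $N,R\in W^{1,2}\cap L^\infty$, so $S_{12}$ computed from $N$ is a priori only a distribution (it contains $T_{1,v}-T_{2,u}$ with $T_i\in L^2$), and the transformation law cannot yet be applied.

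The paper sidesteps this entirely by \emph{not} using \eqref{s_12} here. Instead it reads the Jacobian structure directly off the definition $T_{\sigma,i}^\vartheta=\langle N_{\sigma,u^i},N_\vartheta\rangle$: differentiating and cancelling the mixed second derivatives of $N_\sigma$ gives
\[
  \Delta\tau_\sigma^\vartheta \;=\; T_{\sigma,1,v}^\vartheta-T_{\sigma,2,u}^\vartheta
  \;=\; \langle N_{\sigma,u},N_{\vartheta,v}\rangle-\langle N_{\sigma,v},N_{\vartheta,u}\rangle,
\]
which is immediately a finite sum of $2\times2$ Jacobians in the components of $N\in W^{1,2}$. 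No curvature tensor, no rotation $R$, no $S_{12}$ enters; the Hardy-space results of M\"uller, Coifman--Lions--Meyer--Semmes and Fefferman--Stein then give $\tau\in W^{2,1}_{loc}$ directly. The $L^\infty$-invariance of $|S_{12}|$ is used only \emph{afterwards}, in the proof of Theorem~\ref{existence_rn}, once $N\in W^{2,1}_{loc}$ is already in hand. So the fix for your argument is simply to replace your curvature-based computation of $\Delta\phi_\sigma^\vartheta$ by this one-line identity; everything else you wrote (the boundary normalization of $\phi$, the product rule $W^{1,1}_{loc}\cdot(W^{1,2}\cap L^\infty)\subset W^{1,1}_{loc}$ in two dimensions, the Weingarten step) is correct and matches the paper.
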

\begin{proof}
\begin{enumerate}
\item
The torsion coefficients $T_{\sigma,i}^\vartheta$ of the normal Coulomb frame $N$ are weak solutions of the Euler-Lagrange equations
\begin{equation*}%\label{need_a}
  \mbox{div}\,(T_{\sigma,1}^\vartheta,T_{\sigma,2}^\vartheta)=0\quad\mbox{in}\ B,\quad
  \big\langle(T_{\sigma,1}^\vartheta,T_{\sigma,2}^\vartheta),\nu\big\rangle=0\quad\mbox{on}\ \partial B
\end{equation*}
for all $\sigma,\vartheta=1,\ldots,n$; see Proposition \ref{p1} and Remark \ref{weak_coulomb}. Hence, by a weak version of Poincare's lemma (see e.g.~\cite{bourgain_2000} Lemma 3), there are integral functions $\tau_\sigma^\vartheta\in W^{1,2}(B)$ satisfying
\begin{equation}\label{need_b}
  \tau_{\sigma,u}^\vartheta=-T_{\sigma,2}^\vartheta\,,\quad
  \tau_{\sigma,v}^\vartheta=T_{\sigma,1}^\vartheta
  \quad\mbox{in}\ B\,.
\end{equation}
We now may write the weak form (\ref{weak_euler}) of the Euler-Lagrange equations as
  $$0=\int\hspace{-1.3ex}\int\limits_{\hspace{-1.8ex}B}\big\{\varphi_{u^1}\tau_{\sigma,u^2}^\vartheta-\varphi_{u^2}\tau_{\sigma,u^1}^\vartheta\big\}
     \,dudv=\int\limits_{\partial B}\tau_\sigma^\vartheta\frac{\partial\varphi}{\partial t}\,ds\quad\mbox{for all}\ \varphi\in C^\infty(\overline B),$$
where $\frac{\partial\varphi}{\partial t}$ denotes the tangential derivative of $\varphi$ along $\partial B$ and, as usual, we have written $\tau_\sigma^\vartheta$ for the $L^2$-trace of $\tau_\sigma^\vartheta$ on $\partial B$. Consequently, the lemma of DuBois-Reymond yields $\tau_\sigma^\vartheta\equiv\mbox{const}$ on $\partial B$, and by translation we arrive at the boundary conditions
\begin{equation}\label{need_b2}
  \tau_\sigma^\vartheta=0\quad\mbox{on}\ \partial B\,.
\end{equation}
\item
As can be seen by approximation, the system (\ref{need_b}) and the definition of the torsion coefficients $T_{\sigma,i}^\vartheta$ imply that the $\tau_\sigma^\vartheta$ are weak solutions of the second-order system
  $$\Delta\tau_\sigma^\vartheta
    =-T_{\sigma,2,u}^\vartheta+T_{\sigma,1,v}^\vartheta
    =-\langle N_{\sigma,v},N_{\vartheta,u}\rangle
     +\langle N_{\sigma,u},N_{\vartheta,v}\rangle\quad\mbox{in}\ B\,.$$
By a result of S.\,M\"uller \cite{mueller_1990} and Coifman, Lions, Meyer and Semmes \cite{coifman_lions_meyer_semmes_1993}, the right-hand side of div-curl type belongs to the Hardy space $\mathcal H_{loc}^1(B)$ and, hence, the $\tau_\sigma^\vartheta$ belong to $W_{loc}^{2,1}(B)$ by Fefferman and Stein \cite{fefferman_stein_1972}. Consequently, we find $T_{\sigma,i}^\vartheta\in W^{1,1}_{loc}(B)\cap L^2(B).$ Next, we employ the Weingarten equations (see e.g.~Chen \cite{chen_1973})
\begin{equation}\label{weingarten}
  N_{\sigma,u^i}=-\sum_{j,k=1}^2L_{\sigma,ij}g^{jk}\,X_{u^k}+\sum_{\vartheta=1}^nT_{\sigma,i}^\vartheta N_\vartheta
\end{equation}
in a weak form. For the coefficients of the second fundamental form we have $L_{\sigma,ij}=\langle N_\sigma,X_{u^iu^j}\rangle$ and, thus, $L_{\sigma,ij}\in W^{1,2}(B)$ taking account of $N\in W^{1,2}(B).$ Hence, the Weingarten equations (\ref{weingarten}) yield $N_{\sigma,u^i}\in W_{loc}^{1,1}(B)$ and $N\in W_{loc}^{2,1}(B)$ for our weak Coulomb frame\footnote{Note that $T_{\sigma,i}^\vartheta\in W^{1,1}_{loc}(B)\cap L^2(B)$ and $N_\vartheta\in W^{1,2}(B)\cap L^\infty(B)$ imply $T_{\sigma,i}^\vartheta N_\vartheta\in W^{1,1}_{loc}(B)$ by a careful adaption of the classical product rule in Sobolev spaces.}. This proves the lemma.
\vspace*{-3ex}
\end{enumerate}
\end{proof}
\goodbreak
\begin{theorem}
\label{existence_rn}
For any conformally parametrized immersion $X\in C^{k,\alpha}(\overline B,\mathbb R^{n+2})$ with $k\ge3$ and $\alpha\in(0,1)$ there exists a normal Coulomb frame $N\in C^{k-1,\alpha}(\overline B,\mathbb R^{(n+2)\times n})$ minimizing $\mathcal T_X$.
\end{theorem}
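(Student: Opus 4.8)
The plan is to bootstrap the regularity of the weak normal Coulomb frame $N\in W^{1,2}(B)\cap L^\infty(B)$ from Proposition \ref{prop_exist}, starting from the $W^{2,1}_{loc}(B)$-membership just established in Lemma \ref{w_12}, up to full $C^{k-1,\alpha}(\overline B)$-regularity. The key new ingredient, compared to Lemma \ref{w_12}, is the $L^\infty$-invariance of the curvature $S_{12}$ under rotations from Theorem \ref{curvature_vector} together with Remark \ref{remark_w21}: writing $N_\sigma=\sum_\vartheta R_\sigma^\vartheta\widetilde N_\vartheta$ with $\widetilde N\in C^{k-1,\alpha}(\overline B)$ the reference frame and $R\in W^{1,2}\cap W^{2,1}_{loc}$ the minimizing rotation, the zero-order term $S_{\sigma,12}^\vartheta$ appearing in the Poisson system for the integral functions $\tau_\sigma^\vartheta$ has pointwise length $|S_{12}|=|\widetilde S_{12}|\in C^{k-3,\alpha}(\overline B)\subset L^\infty(B)$, hence lies in every $L^p$.

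First I would revisit the second-order system for the integral functions. From (\ref{need_b}) one has $\tau_{\sigma,u}^\vartheta=-T_{\sigma,2}^\vartheta$, $\tau_{\sigma,v}^\vartheta=T_{\sigma,1}^\vartheta$, so $\Delta\tau_\sigma^\vartheta=T_{\sigma,1,v}^\vartheta-T_{\sigma,2,u}^\vartheta$. Using the definition (\ref{s_12}) of $S_{\sigma,12}^\vartheta$ to substitute $T_{\sigma,1,v}^\vartheta-T_{\sigma,2,u}^\vartheta=S_{\sigma,12}^\vartheta-\sum_\omega(T_{\sigma,1}^\omega T_{\omega,2}^\vartheta-T_{\sigma,2}^\omega T_{\omega,1}^\vartheta)$, and rewriting the quadratic torsion terms via (\ref{need_b}) as a Jacobian-type expression $\sum_\omega(\tau_{\sigma,v}^\omega\tau_{\omega,v}^\vartheta+\tau_{\sigma,u}^\omega\tau_{\omega,u}^\vartheta)$-style div-curl quantities built from the gradients of the $\tau$'s, one obtains for the vector $\tau=(\tau_\sigma^\vartheta)$ a system of the form
\begin{equation*}
  \Delta\tau_\sigma^\vartheta = S_{\sigma,12}^\vartheta + (\text{sum of Jacobians } \partial_u\tau^{\cdot}_{\cdot}\,\partial_v\tau^{\cdot}_{\cdot}) \quad\text{in } B,\qquad \tau_\sigma^\vartheta=0\quad\text{on }\partial B,
\end{equation*}
using the homogeneous boundary condition (\ref{need_b2}). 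The point is that the right-hand side splits into an $L^\infty$-part, namely $S_{\sigma,12}^\vartheta$ whose length is controlled rotation-invariantly, plus a div-curl part $\nabla^\perp a\cdot\nabla b$ with $a,b\in W^{1,2}(B)$. For the div-curl part I would invoke Wente's inequality (the version with zero Dirichlet data) to get $\tau\in W_0^{1,2}(B)\cap C^0(\overline B)$; for the $L^\infty$-part, standard elliptic estimates for the Dirichlet problem give $\tau\in W^{2,p}(B)$ for all $p<\infty$, hence $\tau\in C^{1,\alpha}(\overline B)$. Combining, $\tau\in C^{1,\alpha}(\overline B)$, whence $T_{\sigma,i}^\vartheta\in C^{0,\alpha}(\overline B)$ by (\ref{need_b}).

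Once the torsion coefficients are Hölder continuous, I would feed this back into the Weingarten equations (\ref{weingarten}): the right-hand side $-\sum_{j,k}L_{\sigma,ij}g^{jk}X_{u^k}+\sum_\vartheta T_{\sigma,i}^\vartheta N_\vartheta$ has coefficients $L_{\sigma,ij},g^{jk},X_{u^k}\in C^{k-2,\alpha}(\overline B)$ from $X\in C^{k,\alpha}$, and $T_{\sigma,i}^\vartheta, N_\vartheta\in C^{0,\alpha}$, so $\nabla N\in C^{0,\alpha}$, i.e. $N\in C^{1,\alpha}(\overline B)$. Then $L_{\sigma,ij}=\langle N_\sigma,X_{u^iu^j}\rangle\in C^{1,\alpha}$, the div-curl right-hand side of the $\tau$-Poisson system improves, Schauder theory pushes $\tau$ to $C^{2,\alpha}$, hence $T_{\sigma,i}^\vartheta\in C^{1,\alpha}$, and another round of Weingarten gives $N\in C^{2,\alpha}$. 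Iterating this interplay between the $\tau$-system and the Weingarten equations raises the regularity by one derivative at each step, and the process terminates at $N\in C^{k-1,\alpha}(\overline B)$, limited only by the $C^{k,\alpha}$-regularity of $X$ (which caps $L_{\sigma,ij}$ at $C^{k-2,\alpha}$ and hence $\nabla N$ at $C^{k-2,\alpha}$). The minimizing property is inherited from Proposition \ref{prop_exist}, since a classical normal Coulomb frame is in particular a weak one and $\mathcal T_X$ agrees with $\mathcal F$ on frames obtained by rotating $\widetilde N$.

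I expect the main obstacle to be the first regularity jump — obtaining global continuity, and then $C^{1,\alpha}(\overline B)$, for the integral functions $\tau_\sigma^\vartheta$. At that stage the torsion coefficients are only in $L^2$, so the quadratic terms $\sum_\omega(T_{\sigma,1}^\omega T_{\omega,2}^\vartheta-T_{\sigma,2}^\omega T_{\omega,1}^\vartheta)$ are merely $L^1$ and cannot be handled by naive elliptic estimates; it is essential both to recognize their div-curl (Jacobian) structure in terms of $\nabla\tau$ so that Wente's inequality applies, and to use the rotation-invariance of $|S_{12}|$ from Theorem \ref{curvature_vector} to keep the genuinely zero-order term bounded rather than frame-dependent and possibly unbounded. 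Making the "approximation" argument of Lemma \ref{w_12} rigorous for these weak identities — in particular justifying that $\tau\in W^{2,1}_{loc}$ suffices to write the nonlinearity in divergence form and to integrate by parts against test functions up to the boundary — is the delicate technical point, and the homogeneous Dirichlet condition (\ref{need_b2}) is what makes the boundary version of Wente's inequality available.
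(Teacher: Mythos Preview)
Your overall architecture is exactly that of the paper: start from the weak minimizer of Proposition~\ref{prop_exist}, use Lemma~\ref{w_12} to reach $W^{2,1}_{loc}$, exploit the rotation invariance of $|S_{12}|$ from Theorem~\ref{curvature_vector} and Remark~\ref{remark_w21} to write the Poisson system
\[
  \Delta\tau_\sigma^\vartheta=-\sum_\omega\big(\tau_{\sigma,u}^\omega\tau_{\omega,v}^\vartheta-\tau_{\sigma,v}^\omega\tau_{\omega,u}^\vartheta\big)+S_{\sigma,12}^\vartheta\quad\mbox{in }B,\qquad \tau_\sigma^\vartheta=0\quad\mbox{on }\partial B,
\]
with the Jacobian structure recognized and $S_{12}\in L^\infty$, and then bootstrap via the Weingarten equations. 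All of this matches the paper.

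There is, however, a genuine gap at precisely the point you flag as the main obstacle: the passage from $\tau\in C^0(\overline B)$ to $\tau\in C^{1,\alpha}(\overline B)$. Your ``combining'' argument does not work. If you split $\tau=\tau_1+\tau_2$ with $\Delta\tau_1=S_{12}$ and $\Delta\tau_2=$ (Jacobians), both with zero Dirichlet data, then indeed $\tau_1\in W^{2,p}$ for all $p<\infty$, but Wente's inequality only gives $\tau_2\in C^0(\overline B)\cap W^{1,2}(B)$; the sum is therefore merely $C^0(\overline B)$, not $C^{1,\alpha}$. Equivalently, standard $L^p$-estimates cannot be applied to the full $\tau$ because the Jacobian part of $\Delta\tau$ is only in $L^1$ at this stage. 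Continuity alone does not improve the integrability of $|\nabla\tau|^2$.

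What is missing is a regularity result for continuous $W^{1,2}$-solutions of systems $\Delta\tau=f(w,\nabla\tau)$ with quadratic growth $|f(w,p)|\le a|p|^2+b$. The paper supplies this by first reflecting $\tau$ and $S_{12}$ across $\partial B$ (the homogeneous Dirichlet condition makes odd reflection available) to obtain a weak solution on a larger disc $B_{1+d}$, and then invoking Tomi's theorem \cite{tomi}: a continuous $W^{1,2}$-solution of such a system is locally $C^{1,\nu}$ for every $\nu\in(0,1)$, because continuity forces small oscillation on small balls, which in turn triggers a Morrey-type decay for the Dirichlet energy. Once $\tau\in C^{1,\alpha}(\overline B)$ is secured this way, your subsequent bootstrap through the Weingarten equations and Schauder theory goes through as you describe (with the small addition that one must also track the improved regularity of $S_{12}=R\circ\widetilde S_{12}\circ R^t$ via Theorem~\ref{curvature_vector} at each round, since Schauder on the $\tau$-system needs $S_{12}\in C^\alpha$, not just $L^\infty$).
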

\begin{proof}
\begin{itemize}
\item[1.]
We fix some normal frame $\widetilde N\in C^{k-1,\alpha}(\overline B)$ and construct a weak normal Coulomb frame $N\in W^{1,2}(B)\cap L^\infty(B)$ by Proposition \ref{prop_exist}. Due to Lemma\,\ref{w_12} we then know $N\in W^{2,1}_{loc}(B)$. Defining the orthogonal mapping $R=(R_\sigma^\vartheta)_{\sigma,\vartheta=1,\ldots,n}$ by $R_\sigma^\vartheta:=\langle N_\sigma,\widetilde N_\vartheta\rangle,$
we thus find 
	$$N_\sigma=\sum\limits_{\vartheta=1}^nR_\sigma^\vartheta\widetilde N_\vartheta\quad\mbox{and}\quad R\in W^{2,1}_{loc}(B,SO(n))
      \cap W^{1,2}(B,SO(n))\,.$$
In particular, we can assign a curvature tensor $S_{12}=(S_{\sigma,12}^\vartheta)_{\sigma,\vartheta=1,\ldots,n}\in L^1_{loc}(B)$ to $N$ by formula (\ref{s_12}), and from Theorem \ref{curvature_vector} we conclude $S_{12}\in L^\infty(B)$; compare also Remark \ref{curvature_vector}.
\item[2.]
Introduce $\tau=(\tau_\sigma^\vartheta)_{\sigma,\vartheta=1,\ldots,n}\in W^{1,2}(B)$ by (\ref{need_b}), (\ref{need_b2}). The definition of the normal curvature tensor gives us the nonlinear elliptic system
\begin{equation}\label{need_c}
  \Delta\tau_\sigma^\vartheta
  =-\,\sum_{\omega=1}^n
    (\tau_{\sigma,u}^\omega\tau_{\omega,v}^\vartheta-\tau_{\sigma,v}^\omega\tau_{\omega,u}^\vartheta)
   +S_{\sigma,12}^\vartheta\quad\mbox{in}\ B,\quad
  \tau_\sigma^\vartheta=0\quad\mbox{on}\ \partial B.
\end{equation}
On account of $S_{12}=(S_{\sigma,12}^\vartheta)_{\sigma,\vartheta=1,\ldots,n}\in L^\infty(B),$ a part of Wente's inequality yields $\tau\in C^0(\overline B),$ see e.g.~\cite{brezis_coron_1984}; compare also Rivi\`ere \cite{riviere_2007} and the corresponding boundary regularity theorem in M\"uller and Schikorra \cite{mueller_schikorra_2008} for more general results. By appropriate reflection of $\tau$ and 
$S_{12}$ (the reflected quantities are again denoted by $\tau$ and $S_{12}$) we obtain a weak solution $\tau\in W^{1,2}(B_{1+d})\cap C^0(B_{1+d})$ of
\begin{equation}\label{quadratic_1}
  \Delta\tau
  =f(w,\nabla\tau)
  \quad\mbox{in}\ B_{1+d}:=\{w\in\mathbb R^2\,:\ |w|<1+d\}
\end{equation}
with some $d>0$ and a right-hand side $f$ satisfying
\begin{equation}\label{quadratic_2}
  |f(w,p)|\le a|p|^2+b\quad\mbox{for all}\ p\in\mathbb R^{2n^2},\ w\in B_{1+d}
\end{equation}
with some reals $a,b>0.$ Now, applying Tomi's regularity result \cite{tomi} for weak solutions of the system (\ref{quadratic_1}), (\ref{quadratic_2}) possessing small variation locally in $B_{1+d}$, we find $\tau\in C^{1,\nu}(\overline B)$ for any $\nu\in(0,1)$ (note that Tomi's result applies for such systems with $b=0,$ but his proof can easily be adapted to our inhomogeneous case $b>0$).
\item[3.]
From (\ref{need_b}) we infer $T_i\in C^\alpha(\overline B).$ Thus, the Weingarten equations (\ref{weingarten}) yield $N\in W^{1,\infty}(B)$ on account of $N\in L^\infty(B),$ and we obtain $N\in C^\alpha(\overline B)$ by Sobolev's embedding theorem. Inserting 
this again into the Weingarten equations, we find $N\in C^{1,\alpha}(\overline B)$. Hence, we can conclude $R=(\langle N_\sigma,\widetilde N_\vartheta\rangle)_{\sigma,\vartheta=1,\ldots,n}\in C^{1,\alpha}(\overline B),$ and Theorem \ref{curvature_vector} implies $S_{12}=(S_{\sigma,12}^\vartheta)_{\sigma,\vartheta=1,\ldots,n}\in C^{\alpha}(\overline B)$ (note $\widetilde S_{12}\in C^\alpha(\overline B)$ for $k=3$; in case $k\ge 4$ we even get $S_{12}\in C^{1,\alpha}(\overline B)$). Now the right-hand side of (\ref{need_c}) belongs to $C^\alpha(\overline B),$ and potential theoretic estimates ensure $\tau\in C^{2,\alpha}(\overline B)$. Involving again (\ref{need_b}) gives $T_i\in C^{1,\alpha}(\overline B),$ which proves $N\in C^{2,\alpha}(\overline B)$ using the Weingarten equations once more. Finally, for $k\ge4$, we can bootstrap by concluding $R\in C^{2,\alpha}(\overline B)$ and $S_{12}\in C^{1,\alpha}(\overline B)$ from Theorem \ref{curvature_vector} and repeating the
  arguments above. 
\vspace*{-3ex}
\end{itemize}
\end{proof}
\subsection{An a priori estimate for the torsion coefficients}
To illustrate the advantages of working with normal Coulomb frames we want to present the reader a new analytical estimate for the torsion coefficients of those frames.\\[1ex]
Let $X\in C^{k,\alpha}(\overline B,\mathbb R^{n+2})$ be a conformally parametrized immersion with $k\ge 3$, $\alpha\in(0,1)$. Then we can define the a priori constants
  $$\mathcal T_0:=\inf_{\mbox{\scriptsize $\widetilde N$\,is normal frame}}\mathcal T_X(\widetilde N),\quad S_0:=\sup_{w\in B}|S_{12}(w)|.$$
Here the normal curvature tensor $S_{12}=(S_{\sigma,12}^\vartheta)_{\sigma,\vartheta=1,\ldots,n}$ can be computed from any normal frame $N$, according to Theorem \ref{curvature_vector}. As an easy consequence of our prior work in \cite{froehlich_mueller_2008} we now obtain the following
\begin{theorem}\label{a_priori}
Consider some conformally parametrized immersion $X\in C^{k,\alpha}(\overline B,\mathbb R^{n+2})$ with $k\ge3$, $\alpha\in(0,1)$, and assume the smallness condition
\begin{equation}\label{small}
  \frac{\sqrt{n-2}}4
  \left(
    \frac{n-2}{2\pi}\,\mathcal T_0+\gamma(n)S_0\right)<1
\end{equation}
to be satisfied with $\gamma(n):=\min\{\frac14\sqrt{\frac{n(n-1)}2},\sqrt2\}$. Then there exists a normal Coulomb frame $N\in C^{k-1,\alpha}(\overline B)$ such that the torsion coefficients $T_1,T_2$ of $N$ can be estimated by
\begin{equation}\label{estimate}
  |T_i(w)|\le c\quad\mbox{for all}\ w\in\overline B,\ i=1,2
\end{equation}
with an a priori constant $c=c(n,\mathcal T_0,S_0)<+\infty$.
\end{theorem}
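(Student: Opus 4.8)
The plan is to combine the existence result of Theorem~\ref{existence_rn} with a quantitative version of the argument sketched in the proof of that theorem, tracking constants carefully. First I would invoke Theorem~\ref{existence_rn} to obtain a minimizing normal Coulomb frame $N\in C^{k-1,\alpha}(\overline B)$; by construction $\mathcal T_X(N)=\mathcal T_0$. Let $\tau=(\tau_\sigma^\vartheta)\in C^{2,\alpha}(\overline B)$ be the integral function associated with $N$ via (\ref{need_b}), (\ref{need_b2}), so that $T_i$ is (up to sign and relabeling) the gradient of $\tau$, hence $\|T_i\|_{L^2(B)}^2\le\tfrac12\mathcal T_0$ and $\sup_B|S_{12}|=S_0$ by the very definition of $S_0$ together with Theorem~\ref{curvature_vector}. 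The system (\ref{need_c}) then reads $\Delta\tau=Q(\nabla\tau)+S_{12}$ in $B$, $\tau=0$ on $\partial B$, where $Q$ collects the quadratic Jacobian-type terms $-\sum_\omega(\tau_{\sigma,u}^\omega\tau_{\omega,v}^\vartheta-\tau_{\sigma,v}^\omega\tau_{\omega,u}^\vartheta)$.

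The heart of the matter is an $L^\infty$ bound on $\nabla\tau$ obtained by splitting $\tau=\tau^{(1)}+\tau^{(2)}$, where $\Delta\tau^{(1)}=Q(\nabla\tau)$ with zero boundary values and $\Delta\tau^{(2)}=S_{12}$ with zero boundary values. For $\tau^{(2)}$ a standard potential-theoretic estimate bounds $\|\nabla\tau^{(2)}\|_{L^\infty}$ linearly in $S_0$; the factor $\gamma(n)$ in (\ref{small}) should emerge from the optimal constant in this estimate together with the algebraic fact (already exploited in \cite{froehlich_mueller_2008}) relating $|S_{12}|$ to the norm of the skew-symmetric matrix it sits in, which accounts for the $\min$ defining $\gamma(n)$. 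For $\tau^{(1)}$ the right-hand side has div-curl (Jacobian) structure, so Wente's inequality gives $\|\nabla\tau^{(1)}\|_{L^2}\le C_W\|\nabla\tau\|_{L^2}^2$ and $\|\tau^{(1)}\|_{L^\infty}\le C_W\|\nabla\tau\|_{L^2}^2$ with the sharp Wente constant $C_W=\tfrac1{2\pi}$ (or its $2\pi$-variant), and the $\tfrac{n-2}{2\pi}\mathcal T_0$ term in (\ref{small}) should arise exactly here, the $\sqrt{n-2}$ being a combinatorial factor from summing over the $\sigma,\vartheta$ indices as in the reduction to (\ref{s_12}). Feeding these back, one gets a closed inequality of the form $\|\nabla\tau\|_{L^2}^2\le A\|\nabla\tau\|_{L^2}^2+B$ — more precisely a bound on the relevant $L^\infty$- or energy-quantity $m:=\sup|T_i|$ of the shape $m\le\kappa(n)\big(\tfrac{n-2}{2\pi}\mathcal T_0+\gamma(n)S_0\big)\,m+(\text{lower order})$ — and the smallness hypothesis (\ref{small}), namely $\tfrac{\sqrt{n-2}}4\big(\tfrac{n-2}{2\pi}\mathcal T_0+\gamma(n)S_0\big)<1$, is precisely the condition that the coefficient of $m$ on the right is $<1$, so that the inequality can be absorbed to yield $m\le c(n,\mathcal T_0,S_0)<\infty$, which is (\ref{estimate}).

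The main obstacle I anticipate is \emph{not} the qualitative argument but pinning down the sharp constants so that the final absorbable inequality has exactly the coefficient displayed in (\ref{small}): this requires the optimal Wente constant, the optimal linear estimate for the Dirichlet problem $\Delta\tau^{(2)}=S_{12}$, and a careful bookkeeping of the index sums (the passage from the full tensor $(S_{\sigma,ij}^\vartheta)$ to the reduced coefficients $S_{\sigma,12}^\vartheta$ and the skew-symmetry in $\sigma\leftrightarrow\vartheta$) that produces the factors $\sqrt{n-2}$ and $\gamma(n)$. Since the statement advertises this as "an easy consequence of our prior work in \cite{froehlich_mueller_2008}," I expect that the delicate constant-chasing has essentially been carried out there, and the proof here will largely consist of quoting those estimates, verifying that the normal Coulomb frame produced by Theorem~\ref{existence_rn} is admissible for them, and performing the absorption step under (\ref{small}).
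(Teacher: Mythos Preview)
Your proposal is correct and matches the paper's approach exactly: the paper's proof consists of precisely the two steps you isolate at the end --- invoke Theorem~\ref{existence_rn} to produce a minimizing normal Coulomb frame $N\in C^{k-1,\alpha}(\overline B)$ with $\mathcal T_X(N)=\mathcal T_0$, then apply Theorem~3 of \cite{froehlich_mueller_2008} to this $N$. Your longer sketch of the Wente/splitting argument is a plausible reconstruction of what that cited theorem contains, but the present paper does not reproduce any of it; so while your closing-inequality heuristic is a bit loose (Wente controls $\|\tau^{(1)}\|_{L^\infty}$ and $\|\nabla\tau^{(1)}\|_{L^2}$, not $\|\nabla\tau^{(1)}\|_{L^\infty}$ directly, so the absorption cannot work quite as you wrote it), this is immaterial for the proof as stated here.
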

\begin{proof}
In virtue of Theorem \ref{existence_rn} there exists a normal Coulomb frame $N\in C^{k-1,\alpha}(\overline B)$ with $\mathcal T_X(N)=\mathcal T_0$. Hence, the theorem follows directly from Theorem 3 in \cite{froehlich_mueller_2008} applied to $N$.
\end{proof}

\noindent{\bf Acknowledgement:} We are grateful to Ulrich Dierkes and Ruben Jakob for helpful discussions during the preparation of this manuscript.

\vspace*{4ex}
\noindent
\begin{tabular}{ll}
Steffen Fr\"ohlich                                 &    Frank M\"uller\\
Freie Universit\"at Berlin                         &    Universit\"at Duisburg-Essen\\
Fachbereich Mathematik und Informatik\hspace{8ex}  &    Campus Duisburg, Fachbereich Mathematik\\
Arnimallee 2-6                                     &    Lotharstrasse 65\\
D-14195 Berlin                                     &    D-47048 Duisburg\\
Germany                                            &    Germany\\[0.6ex]
e-mail: sfroehli@mi.fu-berlin.de                   &    e-mail: mueller@math.tu-cottbus.de
\end{tabular}

\end{document}